\newcommand{\red}{\mathrm{red}}
\newcommand{\des}{\mathrm{des}}
\newtheorem{theorem}{Theorem}
\newtheorem{lemma}[theorem]{Lemma}
\long\def\symbolfootnote[#1]#2{\begingroup
\def\thefootnote{\fnsymbol{footnote}}\footnote[#1]{#2}\endgroup}
\newcommand\p{\circle*{0.3}}
\newcommand{\tpt}{$(\mathbf{2+2})$}
\DeclareMathOperator{\asc}{\mathrm{asc}}
\DeclareMathOperator{\run}{\mathrm{run}}
\DeclareMathOperator{\last}{\mathrm{last}}
\title{$p$-Ascent Sequences}
\author{
Sergey Kitaev \\
\small University of Strathclyde\\[-0.8ex]
\small Livingstone Tower, 26 Richmond Street\\[-0.8ex]
\small Glasgow G1 1XH, United Kingdom\\[-0.8ex]
\small \texttt{sergey.kitaev@cis.strath.ac.uk}
 \and
Jeffrey B. Remmel\footnote{Partially supported by NSF grant DMS 0654060.} \\
\small Department of Mathematics\\[-0.8ex]
\small University of California, San Diego\\[-0.8ex]
\small La Jolla, CA 92093-0112. USA\\[-0.8ex]
\small \texttt{remmel@math.ucsd.edu}
}
\date{\small Submitted: Date 1;  Accepted: Date 2;
 Published: Date 3.\\
\small MR Subject Classifications: 05A15}
\begin{document}
\maketitle

\begin{abstract}
A sequence $(a_1, \ldots, a_n)$ of nonnegative integers is an 
{\em ascent sequence} if $a_0 =0$ and for all $i \geq 2$, $a_i$ is at most 1 plus the number of ascents in $(a_1, \ldots, a_{i-1})$.  
Ascent sequences were introduced by Bousquet-M\'elou, Claesson, Dukes, 
and Kitaev in \cite{BCDK}, who showed that these sequences of length $n$ are in 1-to-1 correspondence with 
\tpt-free posets of 
size $n$, which, in turn, are in 1-to-1 correspondence with interval orders of size $n$. Ascent sequences are also in bijection with several other 
classes of combinatorial objects including the set of upper triangular matrices 
with nonnegative integer entries such that no row or column contains all zeros, permutations 
that avoid a certain mesh pattern, and the set of Stoimenow matchings. 

In this paper, we introduce a generalization of ascent sequences, 
which we call {\em $p$-ascent sequences}, where $p \geq 1$. 
A sequence $(a_1, \ldots, a_n)$ of nonnegative integers is a 
$p$-ascent sequence if $a_0 =0$ and for all $i \geq 2$, $a_i$ is at most $p$ plus the number of ascents in $(a_1, \ldots, a_{i-1})$. Thus, in our terminology, ascent sequences are 1-ascent sequences. We generalize 
a result of the authors in \cite{KR} by 
enumerating $p$-ascent sequences with respect to the number of $0$s. We also generalize a result of Dukes, Kitaev, Remmel, and 
Steingr\'{\i}msson in \cite{DKRS} by finding the generating function 
for the number of $p$-ascent sequences which have no consecutive repeated 
elements. Finally, we initiate the study of pattern-avoiding $p$-ascent sequences.

\end{abstract}

\section{Introduction}

\subsection{Ascent sequences}

Ascent sequences were introduced by Bousquet-M\'elou, Claesson, Dukes, 
and Kitaev in \cite{BCDK}, who showed that these sequences of length $n$ are in 1-to-1 correspondence with \tpt-{\em free posets} of 
size $n$. 

Let $\mathbb{N} =\{0,1, \ldots, \}$ denote the natural 
numbers and $\mathbb{N}^*$ denote the set of all words over 
$\mathbb{N}$. A sequence $(a_1,\dots , a_n ) \in \mathbb{N}^n$ is an
{\em ascent sequence of length $n$} if and only if it satisfies $a_1=0$ and
$a_i \in [0,1+\asc(a_1,\dots , a_{i-1})]$ for all $2\leq i \leq n$.
Here, for any integer sequence $(a_1,\dots , a_i)$, the number of
{\em{ascents}} of this sequence is
\begin{equation*}
  \asc(a_1,\dots , a_{i}) = |\{j: a_j<a_{j+1}; 1\leq j <i\}|.
\end{equation*}
For instance, (0, 1, 0, 2, 3, 1, 0, 0, 2) is an ascent sequence 
which has 4 ascents. 
We let $Asc$ denote the set of all ascent sequences, where we assume that the empty
word is also an ascent sequence. For any $n \geq 1$, we let 
 $Asc_n$ denote the set of all ascent sequences of length $n$.  
If $a = (a_1, \ldots, a_n) \in Asc_n$, we let $|a| =n$ be the length  
of $a$, $\sum a = a_1 + \cdots +a_n$ equal the sum of the values 
of $a$,  $|a|_0$ denote the number of occurrences of 
$0$ in $a$, and $\last(a) =a_n$ denote the last letter of $a$.  
We say that $a = (a_1, \ldots, a_n) \in Asc_n$ is an {\em up-down} ascent 
sequence if $a_1 < a_2 > a_3 < a_4 > \cdots$. 
That is, $a = (a_1, \ldots, a_n) \in Asc_n$ is an up-down  ascent 
sequence if $a_i < a_{i+1}$ whenever $i$ is odd, and 
 $a_i > a_{i+1}$ whenever $i$ is even. Throughout this 
paper, we will often identify a sequence 
$(a_1, \ldots , a_n)$ in $\mathbb{N}^n$ with the word 
$a_1 \ldots a_n$.  Thus, instead of writing, say, $(0,0,0)$, we will 
simply write 000, or $0^3$.

We note that there has been considerable work on ascent sequences 
in recent years, see, for example, \cite{BCDK,DKRS,DS,KR}. Ascent sequences are important because 
they are in bijection with several other interesting combinatorial 
objects. To be more precise, it follows from the work of \cite{BCDK,DP,cdk} 
that there are natural bijections between $Asc_n$ and 
the following four classes 
of combinatorial objects. 

\begin{itemize}
\item The set  of \tpt-free posets of size $n$. Here we consider two posets to be equal if they are isomorphic, and an unlabeled poset is said to be \tpt-free if it does not
contain an induced subposet that is isomorphic to \tpt, the union
of two disjoint 2-element chains.  \tpt-free posets are known to be in 1-to-1 correspondence with celebrated {\em interval orders}.
\item The set $M_{n}$ of upper triangular matrices of nonnegative
integers such that
no row or column contains all zero entries, and
the sum of the entries is $n$.
\item The set  $R_n$ of permutations of $[n]=\{1,\ldots,n\}$, where in
each occurrence of the pattern 231, either the letters corresponding
to the 2 and the 3 are nonadjacent, or else the letters corresponding
to the 2 and the 1 are nonadjacent in value. Here, a word contains an occurrence of the pattern 231 if it contains a subsequence of length 3 that is order-isomorphic to 231; see \cite{Kit} for a comprehensive introduction to the theory of patterns in permutations and words.
\item The set $Mch_n$ of Stoimenow matchings on $[2n]$. A \emph{matching} of the set $[2n]=\{1,2,\ldots,2n\}$ is a partition
of $[2n]$ into subsets of size 2, each of which is called an
\emph{arc}.  The smaller number in an arc is its {\em opener}, and the
larger one is its {\em closer}.  A matching is said to be
\emph{Stoimenow} if it has no pair of arcs $\{a<b\}$ and $\{c<d\}$ 
that satisfy one (or both) of the following
conditions: (a) $a=c+1$ and $b<d$ and (b) $a<c$ and $b=d+1$. 
In other words, a Stoimenow matching has no pair of arcs such that one
is nested within the other and either the openers or the closers of the two
arcs differ by 1.
\end{itemize}

Also, Remmel \cite{Rem} showed that there is an interesting 
connection between the {\em Genocchi numbers} 
$G_{2n}$ and the {\em median Genocchi numbers} $H_{2n-1}$ and up-down 
ascent sequences.  In particular, Remmel showed 
that $G_{2n}$ is the number of 
up-down ascent sequences of length $2n-1$, $H_{2n-1}$ is 
the number of up-down ascent sequences of length $2n-2$, and 
that up-down ascent sequences can be used to give a natural 
combinatorial interpretation of the $q$-Genocchi numbers 
of Zeng and Zhou \cite{ZZ}.

Let $p_n$ be the number of \tpt-free posets on $n$ elements or, equivalently, 
the number of ascent sequences of length $n$. 
Bousquet-M\'elou et al.~\cite{BCDK} showed that the generating
function for the number $p_n$ of \tpt-free posets on $n$ elements is
\begin{equation}\label{gf}
P(t)= \sum_{n\geq 0} p_n  t^n=\sum_{n\ge 0} \prod_{i=1}^{n} \left(
1-(1-t)^i\right).  
\end{equation}
In fact, Bousquet-M\'elou et al.~\cite{BCDK} studied a
more general generating function 
$$F(t,u,v) = \sum_{w \in Asc} t^{|w|} u^{\asc(w)} v^{\last(w)}$$
and found an explicit form for such a generating function. 
Kitaev and Remmel \cite{KR} studied a refined version of 
this generating function. That is, they found 
an explicit formula for the generating function 
$$G(t,u,v,z,x):=\sum_{w \in
Asc}t^{|w|}u^{\asc(w)}v^{\last(w)}z^{|w|_0}x^{\run(w)},$$
where for any ascent sequence 
$w$, $\run(w) = 0$ if $w = 0^n$ for some $n$, and 
$\run(w) =r$ if $w = 0^rxv$, where $x$ is a positive integer and $v$ is a word. 
Thus $\run(w)$ keeps track of the initial sequences of 0s that 
start out $w$ if $w$ does not consist of all zeros. 
Kitaev and Remmel were able to use their formula for 
$G(t,u,v,z,x)$ to prove that 
\begin{equation}\label{a-nice-res}
A(t,z):= \sum_{w \in Asc} t^{|w|} z^{|w|_0} = 
1+ \sum_{n \geq 0} \frac{zt}{(1-zt)^{n+1}} \prod_{i=1}^n (1-(1-t)^i).
\end{equation}

\subsection{$p$-ascent sequences}

In this paper, we introduce a generalization of ascent sequences, 
which we call {\em $p$-ascent sequences}, where $p \geq 1$. 
A sequence $(a_1, \ldots, a_n)$ of nonnegative integers is a 
{\em $p$-ascent sequence} if $a_0 =0$ and for all $i \geq 2$, $a_i$ is at most $p$ plus the number of ascents in $(a_1, \ldots, a_{i-1})$. Thus, in our terminology, ascent sequences are 1-ascent sequences.

We note that $p$-ascent sequences of length $n$ can be encoded in terms of (usual) ascent sequences of length $n+2p-2$. 
Indeed, it is easy to see that $(a_1, a_2, \ldots, a_n)$ is a $p$-ascent sequence if
and only if $(0,1,0,1,\ldots,0,1,a_1, a_2, \ldots, a_n)$ is an ascent sequence, where
there are $p-1$ 0s and $p-1$ 1s preceding the $a_1 =0$.  Thus, $p$-ascent sequences can be thought of as a subset of ascent sequences of special type, 
namely, those ascents sequences that start out 
with $(01)^{p-1}0$. 

The last observation allows to obtain a characterization of elements counted by
$p$-ascent sequences in \tpt-free posets, the set of restricted permutations $R_n$, the set of upper triangular
matrices $M_n$, and the set of Stoimenow matchings $Mch_n$ whenever we can characterize the images of
ascent sequences whose corresponding words start with $(01)^{p-1}0$.  We do not get into much detail here providing just one example; we leave the other cases to the interested reader to explore. The \tpt-free posets corresponding to $p$-ascent sequences are \tpt-free posets on $n+2p-2$ elements with the following property. Right before the last $2p-1$ steps in decomposition of such posets (the decomposition is described in~\cite{BCDK}; we do not provide its details here due to space concerns), one obtains the poset with $p$ minimum elements and the other $p-1$ elements forming the pattern of the poset in Figure~\ref{poset-p-asc-seq} corresponding to the case $p=5$.  Of course, it would be interesting to give a direct characterization of such posets (e.g., in terms of forbidden sub-posets) but we were not able to succeed with that. 

\begin{figure}[h]
\begin{center}
\begin{picture}(8,8)

\put(0,0){

\put(0,0){\p} 
\put(2,0){\p} 
\put(4,0){\p} 
\put(6,0){\p} 
\put(8,0){\p} 
\put(0,2){\p} 
\put(2,4){\p} 
\put(4,6){\p} 
\put(6,8){\p} 

\put(0,0){\line(0,1){2}}
\put(0,0){\line(2,4){2}}
\put(0,0){\line(4,6){4}}
\put(0,0){\line(6,8){6}}

\put(2,0){\line(0,1){4}}
\put(2,0){\line(2,6){2}}
\put(2,0){\line(4,8){4}}

\put(4,0){\line(0,1){6}}
\put(4,0){\line(2,8){2}}

\put(6,0){\line(0,1){8}}

}

\end{picture}
\caption{Type of poset obtained right before the last $2p-1$ steps in decomposition of the \tpt-free poset corresponding to a $p$-ascent sequence.} \label{poset-p-asc-seq}
\end{center}
\end{figure}
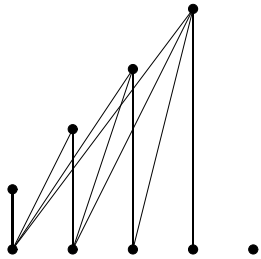

The main goal of this paper is to generalize 
the results of \cite{KR} to $p$-ascent sequences. 
That is, let 
$Asc(p)$ denote the set of $p$-ascent sequences, where, again, 
we consider the empty word to be a $p$-ascent sequence for 
any $p \geq 1$.  Thus, the set of ascent sequences $Asc$ is $Asc(1)$ in our terminology. 
First, we shall study the generating functions 
\begin{equation}
G^{(p)}(t,u,v,z,x):=\sum_{w \in
Asc(p)}t^{|w|}u^{\asc(w)}v^{\last(w)}z^{|w|_0}x^{\run(w)}.
\end{equation}
We shall find an explict formula for $G^{(p)}(t,u,v,z,x)$ for 
any $p \geq 1$ (see Section~\ref{sec2}) and then we shall use that formula to prove 
that 
\begin{equation}
A^{(p)}(t,z):= \sum_{w \in Asc(p)} t^{|w|} z^{|w|_0} = 
1+ \sum_{n \geq 0} \binom{p+n-1}{n} 
\frac{zt}{(1-zt)^{n+1}} \prod_{i=1}^n (1-(1-t)^i).
\end{equation}

 Duncan and Steingr\'{i}msson \cite{DS} introduced the study of 
pattern avoidance in ascent sequences.  We initiate a similar 
study for $p$-ascent sequences.   
Given a word $w = w_1 \ldots w_n \in \mathbb{N}^*$, we let 
$\red(w)$ denote the word that is obtained from $w$ by replacing 
each copy of the $i$-th smallest element in $w$ by $i-1$. For 
example, $\red(238543623) =015321401$.  Then we say that a 
word $u = u_1 \ldots u_j$ {\em occurs} in $w$ if there 
exist $1 \leq i_1 < \cdots < i_j \leq n$ such that 
$\red(w_{i_1}w_{i_2}\ldots w_{i_j}) = u$. We say that $w$ 
{\em avoids} $u$ if $u$ does not occur in $w$.

For any word $u \in \mathbb{N}^*$ such that $\red(u) =u$, we 
let $a_{n,p,u}$ denote the number of $p$-ascent sequences $a$ of length 
$n$ 
avoiding $u$, and  $r_{n,p,u}$ 
denote the number of sequences counted by $a_{n,p,u}$ with no equal consecutive letters, that is, $r_{n,p,u}$  is the number of {\em primitive sequences} counted by $a_{n,p,u}$. We prove a number of 
results about $a_{n,p,u}$ and $r_{n,p,u}$.  For example, 
we will show that for all $p \geq 1$, 
\begin{eqnarray*}
r_{n,p,10} &=& \binom{p+n-2}{n-1} \ \mbox{and} \\
a_{n,p,10} &=& \sum_{s=0}^{n-1} \binom{n-1}{s}\binom{p+s-1}{s}.
\end{eqnarray*}

This paper is organized as follows. In Section~\ref{sec2}, we shall 
find an explicit formula for $G^{(p)}(t,u,v,z,x)$. Unfortunately, 
we can not directly set $u=1$ in that formula so that in 
Section~\ref{lenzeros}, 
we shall find a formula for $G^{(p)}(t,1,1,1,x)$ via an alternative 
proof. This formula will also allow us to find an explicit formula 
for the generating function for the number of primitive $p$-ascent sequences.
Finally, in Section~\ref{pattern-avoidance}, we shall study $a_{n,p,u}$ and $r_{n,p,u}$ 
for certain patterns $u$ of lengths 2 and 3.

\section{Main results}\label{mainresults}\label{sec2}

For $r \geq 1$, let $G^{(p)}_r(t,u,v,z)$ denote the coefficient of $x^r$
in $G^{(p)}(t,u,v,z,x)$.  Thus $G^{(p)}_r(t,u,v,z)$ is the generating function
of those $p$-ascent sequences that begin with $r \geq 1$ 0s followed by
some element between 1 and $p$. We let $G^{(p,r)}_{a,\ell,m,n}$ denote the number of $p$-ascent sequences of
length $n$, which begin with $r$ 0s followed by some element 
between 1 and $p$, have $a$ ascents,
 last letter $\ell$, and a total of $m$ zeros. We then let
\begin{equation}
G^{(p)}_r(t,u,v,z)=\sum_{a,\ell,m\geq 0,\ n \geq r+1}
G^{(p,r)}_{a,\ell,m,n} t^n u^a v^{\ell} z^m.
\end{equation}

Clearly, since the sequences of the form  $0^n$ for some $n$ have no ascents and no
initial run of 0s (by definition), we have that the generating
function for such sequences is $$1+tz +
(tz)^2+\cdots=\frac{1}{1-tz},$$ where 1 corresponds to the empty
word. Thus, we have the following relation between $G^{(p)}$ and $G^{(p)}_r$:
\begin{equation}\label{solution}
G^{(p)}(t,u,v,x,z)=\frac{1}{1-tz}+\sum_{r\geq 1}x^r G^{(p)}_r(t,u,v,z).
\end{equation}

\begin{lemma}\label{lem2} For $r \geq 1$, the generating function 
$G^{(p)}_r(t,u,v,z)$ satisfies
\begin{multline}\label{main}
(v-1-tv(1-u))G^{(p)}_r(t,u,v,z)=\\
t^{r+1}z^ruv(v^p-1)+t((v-1)z-v)G^{(p)}_r(t,u,1,z)+tuv^{p+1}G_r(t,uv,1,z).
\end{multline}\end{lemma}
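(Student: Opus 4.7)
The plan is to derive the functional equation (\ref{main}) by a last-letter recursion on the family of $p$-ascent sequences enumerated by $G^{(p)}_r(t,u,v,z)$. Each such sequence $w$ is either the minimal member of its family (namely $w=0^{r}b$ with $b\in\{1,\ldots,p\}$), or is obtained by appending a single letter $a_n$ to a strictly shorter member $w'$ of the same family, and I would handle these two cases separately. The minimal sequences contribute $\sum_{b=1}^{p}t^{r+1}uv^{b}z^{r}=t^{r+1}uz^{r}v(v^{p}-1)/(v-1)$ directly; once the denominator $v-1$ is cleared at the end, this will match the inhomogeneous term $t^{r+1}z^{r}uv(v^{p}-1)$ on the right-hand side of (\ref{main}).

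Next I would analyze the appending case. Fix $w'$ in the family with $\ell:=\last(w')$ and $a:=\asc(w')$. The $p$-ascent condition allows $a_n\in\{0,1,\ldots,a+p\}$, and I would split this range into (i) $a_n=0$ (no ascent, a fresh zero), (ii) $1\le a_n\le\ell$ (no ascent), and (iii) $\ell<a_n\le a+p$ (an ascent). The total weight of all extensions of a fixed $w'$ is
$$\frac{t}{v^{\ell}}\Bigl[z+\sum_{j=1}^{\ell}v^{j}+u\sum_{j=\ell+1}^{a+p}v^{j}\Bigr]\cdot t^{|w'|}u^{a}v^{\ell}z^{|w'|_0},$$
the $v^{-\ell}$ factor accounting for replacing $v^{\last(w')}$ by $v^{a_{n}}$ in the weight of $w'$. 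Summing the two geometric series, the bracketed expression collapses to
$$\frac{(v-1)z-v+(1-u)v^{\ell+1}+uv^{p+1}v^{a}}{v-1}.$$

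The key step is then to distribute $1/v^{\ell}$ across the three summands in the numerator and sum over $w'$ in the family, recognizing each resulting sum as a specialization of $G^{(p)}_{r}$. The piece $(1-u)v$ is independent of $\ell$ and $a$, so it produces $(1-u)v\cdot G^{(p)}_{r}(t,u,v,z)$. The piece $((v-1)z-v)/v^{\ell}$ strips off $v^{\last(w')}$ from every weight and so produces $((v-1)z-v)\cdot G^{(p)}_{r}(t,u,1,z)$. The piece $uv^{p+1}v^{a}/v^{\ell}$ simultaneously strips $v^{\last(w')}$ and replaces $u^{a}$ by $(uv)^{a}$, producing $uv^{p+1}\cdot G^{(p)}_{r}(t,uv,1,z)$. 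Adding in the base-case contribution, clearing the denominator $v-1$, and moving the $(1-u)v$-proportional term across to the left-hand side would give exactly (\ref{main}).

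The hard part is essentially just bookkeeping: one must carefully normalize every weight by $v^{\last(w')}$ so that the three sums on the right-hand side collapse to $G^{(p)}_{r}$ at three different specializations, and one must track the $p$-dependence of the upper bound $a+p$ in case (iii), which is precisely what produces the factors $v^{p}-1$ and $v^{p+1}$ on the right-hand side of (\ref{main}). No genuinely new ideas beyond the standard last-letter argument of \cite{KR} appear to be needed; generalizing from $p=1$ to arbitrary $p$ just shifts the ascent range in case (iii).
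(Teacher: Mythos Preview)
Your proposal is correct and follows essentially the same last-letter recursion as the paper's proof: both separate out the base sequences $0^{r}b$ with $1\le b\le p$, split the appended letter into the three ranges $a_n=0$, $1\le a_n\le\ell$, and $\ell<a_n\le a+p$, sum the resulting geometric series, and recognize the outcome as a combination of $G^{(p)}_r(t,u,v,z)$, $G^{(p)}_r(t,u,1,z)$, and $G^{(p)}_r(t,uv,1,z)$. The only cosmetic difference is that you first collapse the bracket to $(v-1)z-v+(1-u)v^{\ell+1}+uv^{p+1}v^{a}$ before summing over $w'$, whereas the paper keeps the pieces $v^{\ell+1}-v$ and $v^{a+p+1}-v^{\ell+1}$ separate; the two computations are algebraically identical.
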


\begin{proof}

Our proof follows the same steps as the proof of the $p=1$ case 
of the result that was provided in \cite{KR}. Fix $r \geq 1$.  Let
$x'=(x_1,\ldots,x_{n-1})$ be an ascent sequence
beginning with $r$ 0s followed by a nonzero element, with $a$ ascents and $m$ zeros,
where $x_{n-1}=\ell$. Then $x=(x_1,
  \dots, x_{n-1}, i)$ is an ascent sequence if and only if $i\in [0,
    a+p]$. Clearly, $x$ also begins with $r$ 0s followed by a nonzero element. Now,
if $i=0$, the sequence $x$ has $a$
    ascents and $m+1$ zeros. If $1\leq i\leq \ell$, $x$ has
    $a$ ascents and $m$ zeros. Finally if $i \in [\ell+1,a+p]$, then
$x$ has $a+1$ ascents and
    $m$ zeros. Counting the sequences $0\ldots 0q$ with $r$ 0s and 
$1 \leq q \leq p$
    separately, we have

\begin{eqnarray*}
G^{(p)}_r(t,u,v,z) &=& t^{r+1}uvz^r\frac{v^p-1}{v-1}+\\
&&\sum_{\stackrel{a,\ell,m\geq
0}{n \geq r+1}}G^{(p,r)}_{a,\ell,m,n}t^{n+1}\left(u^av^0z^{m+1}+\sum_{i=1}^{\ell}u^av^iz^m+\sum_{i=\ell+1}^{a+p}u^{a+1}v^iz^m\right)\\
&=& t^{r+1}uvz^r\frac{v^p-1}{v-1}+t\sum_{\stackrel{a,\ell,m\geq
0}{n \geq r+1}}G^{(p,r)}_{a,\ell,m,n}t^nu^az^m\left(z+\frac{v^{\ell+1}-v}{v-1}+u\frac{v^{a+p+1}-v^{\ell+1}}{v-1}\right)\\
&=&
t^{r+1}uvz^r\frac{v^p-1}{v-1}+tzG^{(p)}_r(t,u,1,z)+\\
&&tv\frac{G^{(p)}_r(t,u,v,z)-G_r(t,u,1,z)}{v-1}+tuv\frac{v^pG_r(t,uv,1,z)-G_r(t,u,v,z)}{v-1}.
\end{eqnarray*}

The result follows.
\end{proof}

Next, just like in the proof of the $p=1$ case in \cite{KR}, we use
the kernel method to proceed. Setting
$(v-1-tv(1-u)) =0$ and solving for $v$, we obtain that
the substitution $v=1/(1+t(u-1))$ will eliminate the left-hand side of
(\ref{main}). We can then solve for $G^{(p)}_r(t,u,1,z)$ to
obtain  that
\begin{equation}\label{relation1}
(1+zt(u-1))G^{(p)}_r(t,u,1,z)= \frac{t^rz^ru(1-\delta_1^p)}{\delta_1^p} + 
\frac{u}{\delta_1^p}G_r^{(p)}(t,\frac{u}{\delta_1},1,z).
\end{equation}
Setting $\gamma_1 = 1+zt(u-1)$, we see that 
\begin{equation}\label{relation2}
G^{(p)}_r(t,u,1,z)= \frac{t^rz^ru}{\gamma_1\delta_1^p}(1-\delta_1^p) + 
\frac{u}{\gamma_1\delta_1^p}G_r^{(p)}(t,\frac{u}{\delta_1},1,z)
\end{equation}
where $\delta_1 = 1+t(u-1)$.

Next we define
\begin{eqnarray}
\delta_k &:=& u-(1-t)^k(u-1) \ \mbox{and} \\
\gamma_k &:=& u-(1-zt)(1-t)^{k-1}(u-1)
\end{eqnarray}
for $k \geq 1$. We also set $\delta_0 = \gamma_0 =1$.
Observe that
$\delta_1 = u -(1-t)(u-1) = 1+t(u-1)$ and
$\gamma_1 = u -(1-zt)(u-1) = 1+zt(u-1)$. 

For any function of $f(u)$, we shall write $f(u)|_{u = \frac{u}{\delta_k}}$
for $f(u/\delta_k)$. It is then easy to check that
\begin{enumerate}
\item $ \displaystyle (u-1)|_{u = \frac{u}{\delta_k}} = \frac{(1-t)^k(u-1)}{\delta_k}$,

\item $ \displaystyle \delta_s|_{u = \frac{u}{\delta_k}} = \frac{\delta_{s+k}}{\delta_k}$,

\item $ \displaystyle \gamma_s|_{u = \frac{u}{\delta_k}} = \frac{\gamma_{s+k}}{\delta_k}$, and

\item $ \displaystyle \frac{u}{\delta_s}|_{u = \frac{u}{\delta_k}} = \frac{u}{\delta_{s+k}}$.
\end{enumerate}

Using these relations, one can iterate the recursion (\ref{relation2}). 
For example, 
\begin{eqnarray*}
G^{(p)}_r(t,u,1,z) &=& \frac{t^rz^r u(1-\delta_1^p)}{\gamma_1\delta_1^p} +\\
&& \frac{u}{\delta_1^p} \left(  
\frac{t^rz^r \frac{u}{\delta_1}\left(1-\frac{\gamma_2}{\delta_1}\frac{\delta_2^p}{\delta_1^p}\right)}{
\frac{\gamma_2}{\delta_1} \frac{\delta_2^p}{\delta_1^p}} + 
\frac{\frac{u}{\delta_1}}{\frac{\delta_2^p}{\delta_1^p}}G^{(p)}_r(t,\frac{u}{\delta_2},1,z)\right)\\
&=&  \frac{t^{r}z^ru(1-\delta_1^p)}{\gamma_1\delta_1^p) } + 
 \frac{t^{r}z^ru^2\left(1-\frac{\delta_2^p}{\delta_1^p}\right)}{\gamma_1\gamma_2
\delta_2^p} +  
\frac{u^2}{\gamma_1 \gamma_2 \delta_2^p}G^{(p)}_r(t,\frac{u}{\delta_2},1,z).
\end{eqnarray*}
In general,
\begin{eqnarray*}
\frac{u^k}{\gamma_1 \cdots \gamma_k \delta_k^p}
G^{(p)}_r(t,\frac{u}{\delta_k},1,z) &=& 
\frac{u^k}{\gamma_1 \ldots \gamma_k\delta_k^p}\left(  
\frac{t^{r}z^r\frac{u}{\delta_k}\left(1-\frac{\delta_{k+1}^p}{\delta_k^p}\right)}{\frac{\gamma_{k+1}}{\delta_k}\frac{\delta_{k+1}^p}{\delta_k^p}} + 
\frac{\frac{u}{\delta_k}}{\frac{\gamma_{k+1}}{\delta_k}\frac{\delta_{k+1}^p}{\delta_k^p}}G^{(p)}_r(t,\frac{u}{\delta_{k+1}},1,z)\right)\\
&=&   
 \frac{t^{r}z^ru^{k+1}\left(1-\frac{\delta_{k+1}^p}{\delta_k^p}\right)}{\gamma_1\cdots \gamma_{k+1}\delta_{k+1}^p} + 
\frac{u^{k+1}}{\gamma_1 \cdots \gamma_{k+1}\delta_{k+1}^p}G^{(p)}_r(t,\frac{u}{\delta_{k+1}},1,z).
\end{eqnarray*}
Thus, by iterating recursion (\ref{relation2}), we can derive that  
\begin{equation}
G^{(p)}_r(t,u,1,z) = \frac{t^{r}z^ru(1-\delta_1^p)}{\gamma_1\delta_1^p } 
+ \sum_{k=2}^\infty \frac{t^{r}z^ru^{k}\left(1-\frac{\delta_{k}^p}{\delta_{k-1}^p}\right)}{\gamma_1\cdots \gamma_{k} \delta_k^p}.
\end{equation}
Note that since $\delta_0 =1$, we can rewrite  
$\displaystyle \frac{t^{r+1}z^ru(1-\delta_1^p)}{\gamma_1\delta_1^p}$ as 
$\displaystyle  \frac{t^{r}z^ru(\delta_0^p-\delta_1^p)}{\gamma_1\delta_0^p\delta_1^p }$
and we can rewrite 
$\displaystyle \frac{t^{r}z^ru^{k}\left(1-\frac{\delta_{k}^p}{\delta_{k-1}^p}\right)}{\gamma_1\cdots \gamma_{k} \delta_k^p}$ as 
$\displaystyle  \frac{t^{r}z^ru(\delta_{k-1}^p-\delta_k^p)}{\gamma_1\cdots \gamma_k 
\delta_{k-1}^p\delta_k^p }$. 
Thus we have proved the following theorem. 
\begin{theorem}
\begin{equation}\label{GR1}
G^{(p)}_r(t,u,1,z) = 
\sum_{k=1}^\infty \frac{t^{r}z^ru^{k}(\delta_{k-1}^p-\delta_{k}^p)}{\gamma_1\cdots \gamma_{k} \delta_{k-1}^p\delta_k^p}.
\end{equation}
\end{theorem}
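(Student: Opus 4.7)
The plan is to iterate the functional equation (\ref{relation2}) indefinitely, using the four substitution identities for $\delta_s$ and $\gamma_s$ under $u \mapsto u/\delta_k$, and then take the limit as the number of iterations tends to infinity.

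First, I would establish by induction on $N \geq 1$ the partial-sum identity
\begin{equation*}
G^{(p)}_r(t,u,1,z) = \sum_{k=1}^{N} \frac{t^{r}z^{r}u^{k}(\delta_{k-1}^{p}-\delta_{k}^{p})}{\gamma_1\cdots\gamma_{k}\,\delta_{k-1}^{p}\delta_{k}^{p}} + \frac{u^{N}}{\gamma_1\cdots\gamma_{N}\,\delta_{N}^{p}}\, G^{(p)}_r\!\left(t,\tfrac{u}{\delta_{N}},1,z\right).
\end{equation*}
The base case $N=1$ is exactly (\ref{relation2}), once we rewrite the prefactor $t^{r}z^{r}u(1-\delta_1^{p})/(\gamma_1\delta_1^{p})$ in the uniform form $t^{r}z^{r}u(\delta_0^{p}-\delta_1^{p})/(\gamma_1\delta_0^{p}\delta_1^{p})$ using $\delta_0=\gamma_0=1$, as the excerpt already notes.

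For the inductive step, I would apply (\ref{relation2}) with $u$ replaced by $u/\delta_{N}$ to expand the residual term $G^{(p)}_r(t, u/\delta_{N},1,z)$. Identities (2), (3), and (4) give $\delta_1|_{u\mapsto u/\delta_{N}} = \delta_{N+1}/\delta_{N}$, $\gamma_1|_{u\mapsto u/\delta_{N}} = \gamma_{N+1}/\delta_{N}$, and $(u/\delta_1)|_{u\mapsto u/\delta_{N}} = u/\delta_{N+1}$, so after combining with the prefactor $u^{N}/(\gamma_1\cdots\gamma_{N}\delta_{N}^{p})$ all the auxiliary $\delta_{N}$ factors cancel cleanly. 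This produces precisely the next summand $t^{r}z^{r}u^{N+1}(\delta_{N}^{p}-\delta_{N+1}^{p})/(\gamma_1\cdots\gamma_{N+1}\delta_{N}^{p}\delta_{N+1}^{p})$ together with a new residual term of the required shape, completing the induction. This bookkeeping step is the main (though routine) obstacle: one must carefully collect the $\delta_{N}$ factors from the substitutions and verify that they combine into the stated fraction.

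Finally, I would take the limit $N\to\infty$ in the formal power series ring $\mathbb{Q}[u,v,z][[t]]$. Since $\delta_{N} = u - (1-t)^{N}(u-1)$ and $\gamma_{N} = u - (1-zt)(1-t)^{N-1}(u-1)$, each of $\delta_{N}$ and $\gamma_{N}$ agrees with $u$ modulo $t^{N-1}$; hence $1/(\gamma_1\cdots\gamma_{N}\delta_{N}^{p})$ is a well-defined formal power series, while the factor $G^{(p)}_r(t,u/\delta_{N},1,z)$ contributes at most a bounded power series in $t$. Moreover, each summand in the infinite sum has its leading $t$-power tending to infinity with $k$ (through the factor $t^{r}z^{r}(\delta_{k-1}^{p}-\delta_{k}^{p})$, since $\delta_{k-1}^{p}-\delta_{k}^{p}$ is divisible by $(1-t)^{k-1}-(1-t)^{k}=t(1-t)^{k-1}$), so the sum converges as a formal power series and the residual term vanishes. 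The resulting identity is exactly (\ref{GR1}).
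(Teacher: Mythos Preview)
Your iteration argument is exactly what the paper does: derive the recursion (\ref{relation2}) via the kernel method, then unfold it using the substitution identities. The inductive bookkeeping you describe matches the paper's computation line for line.

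However, your convergence justification contains an error. You claim that the leading $t$-power of the $k$-th summand tends to infinity because $\delta_{k-1}^{p}-\delta_{k}^{p}$ is divisible by $t(1-t)^{k-1}$. But $(1-t)^{k-1}$ has $t$-adic valuation $0$, so this factor has $t$-valuation exactly $1$ for every $k$, and the $k$-th summand always has $t$-order $r+1$. Indeed, the coefficient of $t^{r+1}$ in the $k$-th summand is $pz^{r}(u^{k}-u^{k+1})$, which is nonzero for every $k$; the infinite sum of these is only meaningful as a telescoping series in $u$. Similarly, your claim that $\delta_{N}$ and $\gamma_{N}$ agree with $u$ modulo $t^{N-1}$ is false: setting $t=0$ gives $\delta_{N}=\gamma_{N}=1$, not $u$.

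The fix is to argue $u$-adic (or $(t,u)$-adic) convergence instead: the $k$-th summand carries an explicit factor $u^{k}$, and the residual term carries $u^{N}$, while the remaining factors $\gamma_i^{-1}$, $\delta_i^{-p}$, and $G^{(p)}_r(t,u/\delta_N,1,z)$ are all well-defined power series in $t$ with constant term independent of this $u^{N}$. Hence both the residual term and the tail of the sum vanish in $\mathbb{Q}[z][[t,u]]$. The paper itself glosses over this point entirely (it simply writes ``by iterating recursion (\ref{relation2}), we can derive that\ldots''), but when it revisits the analogous series for $H^{(p)}$ in Section~\ref{lenzeros} it does note explicitly that the identity holds ``as a power series in $u$.''
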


We have used Mathematica to compute that
\begin{eqnarray*}
&&G^{(2)}_1(t,u,1,z) = 2u z t^2+\left(3zu^2+(3z+2z^2)u\right) t^3 +\\
&&\left(4zu^3+(13z+9z^2)u^2+(4z+3z^2+2z^3)u\right) t^4+\\
&&\left(5zu^4+(39z+28z^2)u^3+(35z+34z^2+15z^3)u^2+(5z+4z^2+3z^3+2z^4)u\right) t^5+O[t]^6.
\end{eqnarray*}

\begin{eqnarray*}
&&G^{(3)}_1(t,u,1,z) = 3u z t^2+\left(6zu^2+(6z+3z^2)u\right) t^3 + \\
&&\left(10zu^3+(34z+18z^2)u^2+(10z+6z^2+3z^3)u\right) t^4+\\
&&\left(15zu^4+(125z+70z^2)u^3+(115z+88z^2+30z^3)u^2+(15z+10z^2+6z^3+3z^4)u\right) t^5+\\
&&O[t]^6.
\end{eqnarray*}

\begin{eqnarray*}
&&G^{(4)}_1(t,u,1,z) = 4u z t^2+\left(10zu^2+(10z+4z^2)u\right) t^3 + \\
&&\left(20zu^3+(70z+30z^2)u^2+(20z+10z^2+4z^3)u\right) t^4+\\
&&\left(35zu^4+(305z+140z^2)u^3+(285z+180z^2+50z^3)u^2+(35z+20z^2+10z^3+4z^4)u\right) t^5+\\
&&O[t]^6.
\end{eqnarray*}

For example, the coefficient of $t^3$ in $G^{(2)}_1(t,u,1,z)$, which is $3zu^2+(3z+2z^2)u$, makes sense since there are eight 2-ascent sequences 
which start with 0 and are followed by a nonzero element, namely,
$$010,011,012, 013, 020,021, 022, \ \mbox{and}\  023,$$
of which three have one zero and two ascents, three have one zero and one ascent, and 
two have two zeros  and one ascent. Similarly, there a 19 3-ascent 
sequences of length 4 which have only one ascent, namely,
\begin{eqnarray*}
&&0111, 0110, 0100, 0222, 0221,0220, 0211, 0210, 0200. \\
&& 0333,0332,0331,0330,0322,0321,0320,0311,0310,\ \mbox{and}\  0300,
\end{eqnarray*}
three of which have three zeros, six of which have two zeros, and ten of which have 
one zero, and this is consistent with the term $(10z+6z^2+3z^3)ut^4$.

Note that we can rewrite (\ref{main}) as
\begin{equation}\label{main2}
G^{(p)}_r(t,u,v,z) = \frac{t^{r+1}z^ruv(v^p-1)}{v\delta_1-1} +
\frac{t(z(v-1)-v)}{v\delta_1-1}G^{(p)}_r(t,u,1,z) +
\frac{uv^{p+1}t}{v\delta_1-1}
G^{(p)}_r(t,uv,1,z).
\end{equation}

For $s \geq 1$, we let
\begin{eqnarray*}
\bar{\delta}_s &=& \delta_s|_{u =uv} = uv-(1-t)^s(uv-1) \ \mbox{and}\\
\bar{\gamma}_s &=& \gamma_s|_{u =uv} = uv-(1-zt)(1-t)^{s-1}(uv-1)
\end{eqnarray*}
and set $\bar{\delta}_0 = \bar{\gamma}_0 = 1$. Then using
(\ref{main2}) and (\ref{GR1}), we have the following theorem.

\begin{theorem}\label{Gr}
For all $r \geq 1$,
\begin{multline}
G^{(p)}_r(t,u,v,z) =\\ t^rz^r \left(\frac{tuv(v^p-1)}{v\delta_1-1} + \frac{t(z(v-1)-v)}{v\delta_1-1} \sum_{k \geq 1} 
\frac{(\delta_{k-1}^p -\delta_k^p)}{\gamma_1 \cdots \gamma_k 
\delta_{k-1}^p\delta_k^p} + \frac{tuv^{p+1}}{v\delta_1 -1}
\sum_{k \geq 1} 
\frac{(\bar{\delta}_{k-1}^p -\bar{\delta}_k^p)}{\bar{\gamma}_1 \cdots \bar{\gamma}_k 
\bar{\delta}_{k-1}^p\bar{\delta}_k^p}\right).
\end{multline}
\end{theorem}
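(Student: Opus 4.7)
The plan is to deduce the theorem as a direct consequence of equation (\ref{main2}), which was obtained from Lemma~\ref{lem2} by a trivial algebraic rearrangement, together with the closed-form expression (\ref{GR1}) for $G^{(p)}_r(t,u,1,z)$. Equation (\ref{main2}) already writes $G^{(p)}_r(t,u,v,z)$ as a sum of three pieces: a purely explicit term $\frac{t^{r+1}z^r uv(v^p-1)}{v\delta_1-1}$, a scalar multiple of $G^{(p)}_r(t,u,1,z)$, and a scalar multiple of $G^{(p)}_r(t,uv,1,z)$. All the nontrivial work has therefore been done; what remains is substitution.

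First, I would factor $t^r z^r$ out of the leading explicit term so that it matches the outer factor in the theorem. Next, I would substitute the closed form (\ref{GR1}) for the middle piece, $\frac{t(z(v-1)-v)}{v\delta_1-1} G^{(p)}_r(t,u,1,z)$, again pulling $t^r z^r$ outside the sum.

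For the third piece, involving $G^{(p)}_r(t,uv,1,z)$, the key observation is that the sequences $\bar{\delta}_s$ and $\bar{\gamma}_s$ have been defined precisely so that
$$\delta_s\big|_{u=uv} = \bar{\delta}_s, \qquad \gamma_s\big|_{u=uv} = \bar{\gamma}_s.$$
Hence applying (\ref{GR1}) at $(u,1)$ replaced by $(uv,1)$ gives
$$G^{(p)}_r(t,uv,1,z) = \sum_{k\geq 1} \frac{t^r z^r (uv)^k (\bar{\delta}_{k-1}^p - \bar{\delta}_k^p)}{\bar{\gamma}_1 \cdots \bar{\gamma}_k \bar{\delta}_{k-1}^p \bar{\delta}_k^p},$$
and multiplying by $\frac{tuv^{p+1}}{v\delta_1-1}$ and again extracting $t^r z^r$ reproduces the final summand of the theorem.

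Summing the three pieces yields the claimed formula. I do not expect any real obstacle beyond bookkeeping: the nontrivial combinatorial input is concentrated in the kernel-method derivation of (\ref{GR1}) already given in the paper, and this theorem is essentially a repackaging of (\ref{main2}) with (\ref{GR1}) plugged in twice. The only point requiring genuine verification is the substitution identity for $\bar{\delta}_s$ and $\bar{\gamma}_s$, which follows immediately from replacing $u$ by $uv$ in the definitions $\delta_s = u-(1-t)^s(u-1)$ and $\gamma_s = u-(1-zt)(1-t)^{s-1}(u-1)$.
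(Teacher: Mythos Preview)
Your approach is exactly the one the paper uses: the paper's entire proof is the sentence ``Then using (\ref{main2}) and (\ref{GR1}), we have the following theorem,'' and you have simply spelled out that substitution, including the observation that replacing $u$ by $uv$ in (\ref{GR1}) produces the barred quantities $\bar{\delta}_s,\bar{\gamma}_s$. One minor remark: your correctly derived expressions carry factors of $u^k$ (respectively $(uv)^k$) inside the two sums, which appear to have been dropped in the displayed statement of the theorem; this is a typographical slip in the paper rather than a defect in your argument.
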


We have used Mathematica to compute that
\begin{eqnarray*}
&&G^{(2)}_1(t,u,v,z) =  (u v z+u v^2 z)t^2 + (2 u v z+u v^2 z+u^2 v^2 z+2 u^2 v^3 z+2 u z^2)t^3+\\
&& \left( 3 u v z+3 u^2 v z+u v^2 z+5 u^2 v^2 z+5 u^2 v^3 z+u^3 v^3 z+3 u^3 v^4 z+3 u z^2+3 u^2 z^2+ \right. \\
&&\left. \ \ 2 u^2 v z^2+2 u^2 v^2 z^2+2 u^2 v^3 z^2+2 u z^3 \right)t^4 + \\
&& \left(4 u v z+13 u^2 v z+4 u^3 v z+u v^2 z+13 u^2 v^2 z+7 u^3 v^2 z+9 u^2 v^3 z+12 u^3 v^3 z+ \right. \\
&&\ \ 16 u^3 v^4 z+u^4 v^4 z+4 u^4 v^5 z+4 u z^2+13 u^2 z^2+4 u^3
z^2+9 u^2 v z^2+3 u^3 v z^2+\\
&&\ \ 7 u^2 v^2 z^2+5 u^3 v^2 z^2+5 u^2 v^3 z^2+7 u^3 v^3 z^2+
9 u^3 v^4 z^2+3 u z^3+9 u^2 z^3+\\
&&\left. \ \ 2 u^2 v z^3+2 u^2 v^2 z^3+2 u^2
v^3 z^3+2 u z^4\right)t^5 + O[t]^6.
\end{eqnarray*}

\begin{eqnarray*}
&&G^{(3)}_1(t,u,v,z) = (u v z+u v^2 z+u v^3 z)t^2+ \\
&&\left(3 u v z+2 u v^2 z+u^2 v^2 z+u v^3 z+2 u^2 v^3 z+3 u^2 v^4 z+3 u z^2\right)t^3+\\
&&\left( 6 u v z+6 u^2 v z+3 u v^2 z+9 u^2 v^2 z+u v^3 z+10 u^2 v^3 z+u^3 v^3 z+9 u^2 v^4 z+3 u^3 v^4 z+\right. \\
&&\left. \ \ 6 u^3 v^5 z+6 u z^2+6 u^2 z^2+3 u^2 v z^2+3 u^2 v^2
z^2+3 u^2 v^3 z^2+3 u^2 v^4 z^2+3 u z^3\right) t^4 + \\
&&\left( 10 u v z+34 u^2 v z+10 u^3 v z+4 u v^2 z+34 u^2 v^2 z+16 u^3 v^2 z+u v^3 z+28 u^2 v^3 z+25 u^3 v^3 z+\right. \\
&&\ \ 19 u^2 v^4 z+34 u^3 v^4 z+u^4 v^4 z+40 u^3
v^5 z+4 u^4 v^5 z+10 u^4 v^6 z+10 u z^2+34 u^2 z^2+ 10 u^3 z^2+\\
&& \ \ 18 u^2 v z^2+6 u^3 v z^2+15 u^2 v^2 z^2+9 u^3 v^2 z^2+12 u^2 v^3 z^2+12 u^3 v^3 z^2+9u^2 v^4 z^2+15 u^3 v^4 z^2+\\
&&\left. \ \ 18 u^3 v^5 z^2+6 u z^3+18 u^2 z^3+3 u^2 v z^3+3 u^2 v^2 z^3+3 u^2 v^3 z^3+3 u^2 v^4 z^3+3 u z^4\right) t^5 + O[t]^6.
\end{eqnarray*}
For example, the term $3 u v^2 z t^4$ that appears in 
$G^{(3)}_1(t,u,v,z)$ corresponds to the sequences 
0222, 0322, and 0332.  

 It is easy to see from Theorem \ref{Gr} that
\begin{equation}\label{rel16}
G^{(p)}_r(t,u,v,z) = t^{r-1}z^{r-1} G^{(p)}_1(t,u,v,z).
\end{equation}
The relation (\ref{rel16}) is also easy to see combinatorially since every ascent
sequence counted by $G^{(p)}_r(t,u,v,z)$ is of the form $0^{r-1}a$, where
$a$ is a $p$-ascent sequence counted by $G^{(p)}_1(t,u,v,z)$.

Note that
\begin{eqnarray*}
G^{(p)}(t,u,v,z,x) &=&  \frac{1}{1-tz}+ \sum_{r \geq 1} G^{(p)}_r(t,u,v,z)x^r \\
&=& \frac{1}{1-tz}+ \sum_{r \geq 1} t^{r-1} z^{r-1} G^{(p)}_1(t,u,v,z)  x^r \\
&=& \frac{1}{1-tz}+ \frac{x}{1-tzx} G^{(p)}_1(t,u,v,z).
\end{eqnarray*}
Thus we have the following theorem.
\begin{theorem}\label{mainres}
\begin{eqnarray}
&&G^{(p)}(t,u,v,z,x) = \frac{1}{1-tz}+ \frac{x}{1-tzx} G^{(p)}_1(t,u,v,z).
\end{eqnarray}
\end{theorem}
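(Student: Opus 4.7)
The plan is to obtain Theorem \ref{mainres} as a direct consequence of the two identities that have already been set up: the decomposition (\ref{solution}) of $G^{(p)}$ according to the initial run of zeros, and the shift identity (\ref{rel16}) relating $G^{(p)}_r$ to $G^{(p)}_1$. Since both of these have been established, the theorem reduces to a short algebraic manipulation and the summation of a geometric series.

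More concretely, I would start from
\[
G^{(p)}(t,u,v,z,x) \;=\; \frac{1}{1-tz} \;+\; \sum_{r \geq 1} x^{r}\, G^{(p)}_r(t,u,v,z),
\]
which is exactly (\ref{solution}) and which isolates the contribution $\frac{1}{1-tz}$ coming from the all-zero sequences $0^{n}$. The next step is to substitute (\ref{rel16}), namely $G^{(p)}_r(t,u,v,z) = (tz)^{r-1} G^{(p)}_1(t,u,v,z)$, into this sum, pull $G^{(p)}_1(t,u,v,z)$ out, and recognize the remaining factor as
\[
\sum_{r \geq 1} x^{r} (tz)^{r-1} \;=\; \frac{x}{1-tzx}.
\]
Combining these gives precisely the desired expression
\[
G^{(p)}(t,u,v,z,x) \;=\; \frac{1}{1-tz} \;+\; \frac{x}{1-tzx}\, G^{(p)}_1(t,u,v,z).
\]

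There is essentially no obstacle in this step itself; all of the analytic work has been absorbed into the earlier theorem. The only substantive point to double-check is the shift identity (\ref{rel16}), since the whole argument hinges on it. This identity was justified in two ways in the preceding discussion: formally by inspecting the explicit expression given in Theorem \ref{Gr} and noting that the $r$-dependence there enters only through the prefactor $t^{r}z^{r}$, and combinatorially by the observation that every $p$-ascent sequence enumerated by $G^{(p)}_r$ is of the form $0^{r-1}a$ with $a$ counted by $G^{(p)}_1$. Either verification suffices, and once (\ref{rel16}) is in hand the theorem follows by the one-line calculation above.
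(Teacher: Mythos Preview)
Your proposal is correct and follows essentially the same route as the paper: start from the decomposition (\ref{solution}), substitute the shift identity (\ref{rel16}), and sum the resulting geometric series $\sum_{r\ge 1} x^r(tz)^{r-1}=\frac{x}{1-tzx}$. There is nothing to add.
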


\section{Specializations of our general results}\label{lenzeros}

In this section, we shall compute the generating function  for 
$p$-ascent sequences
by length and the number of zeros.

For $n \geq 1$,
let $H^{(p)}_{a,b,\ell, n}$ denote the number of $p$-ascent sequences of
length $n$ with $a$ ascents and $b$ zeros which have last letter
$\ell$. Then we first wish to compute
\begin{equation}\label{H1}
H^{(p)}(t,u,v,z) = \sum_{n \geq 1,\ a,b,\ell \geq 0}
H^{(p)}_{a,b,\ell, n} u^a z^b v^\ell t^n.
\end{equation}

Using the same reasoning as in the previous section, we see
that
\begin{eqnarray*}
H^{(p)}(t,u,v,z) &=& tz+\sum_{\stackrel{a,b,\ell \geq 0}{n \geq 1}}
H^{(p)}_{a,b,\ell,n}t^{n+1} \left(u^av^0z^{b+1}+\sum_{i=1}^{\ell}u^av^iz^b+\sum_{i=\ell+1}^{a+p}u^{a+1}v^iz^b\right)\\
&=& tz+t\sum_{\stackrel{a,b,\ell\geq
0}{n \geq r+1}}H_{a,b,\ell,n}t^nu^az^b\left(z+\frac{v^{\ell+1}-v}{v-1}+u\frac{v^{a+p+1}-v^{\ell+1}}{v-1}\right)\\
&=&
tz+tzH^{(p)}(t,u,1,z) + \frac{tv}{v-1}
\left(H^{(p)}(t,u,v,z) -H^{(p)}(t,u,1,z)\right) + \\
&& \  \frac{tuv}{v-1} \left(H^{(p)}(t,uv,1,z) -H^{(p)}(t,u,v,z)\right).
\end{eqnarray*}
Solving for $H^{(p)}(t,u,v,z)$, we see that we have the following 
lemma.

\begin{lemma}\label{lemH1}
\begin{multline}\label{recH1}
(v\delta_1-1)H^{(p)}(t,u,v,z) = \\
 (v-1)tz+t (z(v-1)-v)H^{(p)}(t,u,1,z) 
+tuv^{p+1}H^{(p)}(t,uv,1,z).
\end{multline}
\end{lemma}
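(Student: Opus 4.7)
The plan is to mimic the proof of Lemma~\ref{lem2} exactly, but without the restriction that a sequence begin with $r$ zeros followed by a nonzero letter. Let $w' = (x_1, \ldots, x_{n-1})$ be a $p$-ascent sequence with $a$ ascents, $b$ zeros, and last letter $\ell$. A $p$-ascent sequence of length $n$ is either the one-letter sequence $(0)$ (contributing $tz$ to the generating function, accounting for the additive constant) or obtained from such a $w'$ by appending a letter $i \in [0, a+p]$.

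I would partition the appended letter into three cases just as in the proof of Lemma~\ref{lem2}: case $i=0$ adds a zero and preserves the ascent count; case $1 \le i \le \ell$ changes neither statistic; and case $\ell+1 \le i \le a+p$ adds one ascent. Collecting the three geometric sums in $i$ against $H^{(p)}_{a,b,\ell,n-1}$ and then packaging over $a,b,\ell,n$ yields the functional equation
\begin{align*}
H^{(p)}(t,u,v,z) &= tz + tz\,H^{(p)}(t,u,1,z) + \frac{tv}{v-1}\bigl(H^{(p)}(t,u,v,z) - H^{(p)}(t,u,1,z)\bigr) \\
&\quad + \frac{tuv}{v-1}\bigl(v^{p}\,H^{(p)}(t,uv,1,z) - H^{(p)}(t,u,v,z)\bigr),
\end{align*}
where the factor $v^p$ in the last term appears because the upper summation bound is $a+p$ rather than $a+1$, and where the identity $u \cdot u^a \cdot v^{a+p+1} = v^{p+1}(uv)^a \cdot u$ is what converts the resulting sum into the shifted generating function $H^{(p)}(t,uv,1,z)$.

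Finally, I would multiply through by $v-1$ and collect the $H^{(p)}(t,u,v,z)$ terms on the left. The coefficient on the left-hand side simplifies as $(v-1) - tv + tuv = v\bigl(1+t(u-1)\bigr) - 1 = v\delta_1 - 1$, while the remaining terms combine to give $(v-1)tz$, a factor $t(z(v-1) - v)$ multiplying $H^{(p)}(t,u,1,z)$, and a factor $tuv^{p+1}$ multiplying $H^{(p)}(t,uv,1,z)$, exactly as in~\eqref{recH1}. There is no serious obstacle here, since the argument is parallel to Lemma~\ref{lem2}; the only point requiring care is tracking the $v^p$ contribution in case (iii) and verifying that the substitution $u \mapsto uv$ is consistent with setting $v=1$ in the resulting $H^{(p)}$, so that the right-hand side depends only on $H^{(p)}(t,u,1,z)$ and $H^{(p)}(t,uv,1,z)$ apart from $H^{(p)}(t,u,v,z)$ itself.
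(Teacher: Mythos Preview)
Your proposal is correct and follows exactly the same approach as the paper: both derive the functional equation by decomposing a $p$-ascent sequence as either the single letter $0$ or a shorter sequence with a letter $i\in[0,a+p]$ appended, split into the three ranges $i=0$, $1\le i\le\ell$, and $\ell+1\le i\le a+p$, sum the resulting geometric series, and then clear the denominator $v-1$. Your intermediate display even correctly records the factor $v^{p}$ in front of $H^{(p)}(t,uv,1,z)$, which is needed for the final coefficient $tuv^{p+1}$ in~\eqref{recH1}.
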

Again, the substitution $v = \frac{1}{\delta_1}$ eliminates the left-hand side of (\ref{recH1}). We
can then solve for $H^{(p)}(u,1,z,t)$ to obtain the recursion
\begin{equation}\label{recH2}
H^{(p)}(t,u,1,z) = \frac{(1-\delta_1)z}{\gamma_1} + \frac{u}{ \gamma_1 
\delta_1^p}
H^{(p)}(t,\frac{u}{\delta_1},1,z).
\end{equation}

We can iterate the recursion (\ref{recH2}) in the same manner 
as we iterated the recursion (\ref{relation2}) in the previous 
section to prove that 
\begin{equation}
H^{(p)}(t,u,1,z) = \sum_{n \geq 0} 
\frac{(\delta_n-\delta_{n+1})z u^n}{\gamma_1 \cdots \gamma_{n+1} 
\delta_n^p}.
\end{equation}
Notice that for all $n \geq 0$, 
\begin{eqnarray*}
\delta_n - \delta_{n+1} &=& (u-(1-t)^n(u-1)) -  (u-(1-t)^{n+1}(u-1)) \\
&=& -(1-t)^n(u-1)(1-(1-t) \\
&=&(1-u)t(1-t)^n.
\end{eqnarray*}
Thus, as a power series in $u$, we can conclude the following.
\begin{theorem}\label{thmH1}
\begin{equation}\label{serH1}
H^{(p)}(t,u,1,z)=\sum_{n=0}^\infty \frac{zt(1-u)u^n(1-t)^n}{\delta_n^p \prod_{i=1}^{n+1} \gamma_i}.
\end{equation}
\end{theorem}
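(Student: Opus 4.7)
The plan is to iterate the kernel-method recursion (\ref{recH2}) derived in Lemma~\ref{lemH1}, mirroring the approach used in Section~\ref{sec2} for (\ref{relation2}). The main differences here are that the ``starter'' term is the simpler $(1-\delta_1)z/\gamma_1$ and there is no $t^r z^r$ prefactor to carry along.

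First I would substitute $u \mapsto u/\delta_1$ into (\ref{recH2}) and apply the substitution rules $\delta_s|_{u=u/\delta_k} = \delta_{s+k}/\delta_k$, $\gamma_s|_{u=u/\delta_k} = \gamma_{s+k}/\delta_k$, and $(u/\delta_s)|_{u=u/\delta_k} = u/\delta_{s+k}$ already established in Section~\ref{sec2}, rewriting $H^{(p)}(t,u/\delta_1,1,z)$ in terms of $H^{(p)}(t,u/\delta_2,1,z)$. A short simplification (in which the $\delta_1^p$ in the denominator telescopes against the substitution of $\delta_1^p$) yields the two-term expansion
\begin{equation*}
H^{(p)}(t,u,1,z) = \frac{(\delta_0 - \delta_1)z}{\gamma_1 \delta_0^p} + \frac{(\delta_1 - \delta_2)zu}{\gamma_1 \gamma_2 \delta_1^p} + \frac{u^2}{\gamma_1 \gamma_2 \delta_2^p}\, H^{(p)}\!\left(t,\tfrac{u}{\delta_2},1,z\right),
\end{equation*}
after using $\delta_0 = \gamma_0 = 1$ to put the leading term in the same shape as the second term. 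An induction on the number of iterations then gives, for every $N \geq 0$,
\begin{equation*}
H^{(p)}(t,u,1,z) = \sum_{k=0}^{N} \frac{(\delta_k - \delta_{k+1}) z u^k}{\gamma_1 \cdots \gamma_{k+1} \delta_k^p} + \frac{u^{N+1}}{\gamma_1 \cdots \gamma_{N+1} \delta_{N+1}^p}\, H^{(p)}\!\left(t,\tfrac{u}{\delta_{N+1}},1,z\right).
\end{equation*}

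Next I would verify that the tail vanishes in the formal power series ring $\mathbb{Q}[[t,u,z]]$: since $\delta_k = 1 + O(t)$ and $\gamma_k = 1 + O(t)$ for each $k$, and $H^{(p)}(t,w,1,z)$ has $t$-valuation at least $1$ (the shortest $p$-ascent sequence has length $1$), the remainder term has $u$-valuation at least $N+1$, so any fixed monomial is stabilized after finitely many iterations. Letting $N \to \infty$ and then substituting the closed form
\begin{equation*}
\delta_k - \delta_{k+1} = -(1-t)^k(u-1)\bigl(1-(1-t)\bigr) = (1-u)t(1-t)^k
\end{equation*}
into the summand converts the numerator $(\delta_k - \delta_{k+1}) z u^k$ into $zt(1-u) u^k (1-t)^k$, producing the claimed formula.

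The main obstacle, as in Section~\ref{sec2}, is the bookkeeping of the telescoping of the $\delta^{p}$ factor: one must verify at each step that $\delta_1^p|_{u=u/\delta_k} = \delta_{k+1}^p/\delta_k^p$ exactly cancels the $\delta_k^p$ already in the denominator at stage $k$ and leaves behind a single $\delta_{k+1}^p$ for stage $k+1$, with no stray factors of any $\delta_j$ surviving. Once this cancellation is spelled out, both the induction and the subsequent manipulation are routine, and the statement follows.
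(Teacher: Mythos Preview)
Your proposal is correct and follows essentially the same route as the paper: iterate the kernel recursion (\ref{recH2}) using the substitution rules from Section~\ref{sec2}, obtain $H^{(p)}(t,u,1,z) = \sum_{n \geq 0} (\delta_n-\delta_{n+1}) z u^n / (\gamma_1 \cdots \gamma_{n+1} \delta_n^p)$, and then simplify the numerator via $\delta_n - \delta_{n+1} = (1-u)t(1-t)^n$. Your treatment is in fact more explicit than the paper's, which simply asserts that the iteration goes through ``in the same manner'' as for (\ref{relation2}) and does not discuss the vanishing of the tail; your remark that the remainder has $u$-valuation at least $N+1$ (since each $\gamma_i$ and $\delta_i$ has invertible constant term) is the right justification.
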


We would like to set $u=1$ in the power series
$\sum_{s=0}^\infty
\frac{zt(1-u)u^s (1-t)^s}{\delta_s \prod_{i=1}^{s+1} \gamma_i}$,
but the factor $(1-u)$ in the series does not allow us to do that
in this form. Thus our next step is to rewrite the series
in a form where it is obvious that we can set $u=1$ in the series.
To that end, observe that for $k \geq 1$,
\begin{equation*}
\delta_k = u -(1-t)^k(u-1) = 1+u-1 -(1-t)^k(u-1) = 1 -((1-t)^k-1)(u-1),
\end{equation*}
so that by Newton's binomial theorem, 
\begin{eqnarray}\label{H4}
\frac{1}{\delta_k^p} &=& \left(\frac{1}{1-(u-1)((1-t)^k-1)}\right)^p 
\nonumber \\
&=&  
\sum_{n=0}^\infty \binom{p-1+n}{n} ((u-1)((1-t)^k-1))^n \nonumber \\
&=& \sum_{n=0}^\infty \binom{p-1+n}{n} (u-1)^n 
\left( \sum_{m=0}^n (-1)^{n-m} \binom{n}{m} (1-t)^{km}\right).
\end{eqnarray}

Substituting (\ref{H4}) into (\ref{serH1}), we see that
\begin{eqnarray*}
&&H^{(p)}(t,u,1,z) =\\
&& \frac{zt(1-u)}{\gamma_1} +
\sum_{k \geq 1} \frac{zt(1-u)u^k (1-t)^k}{\prod_{i=1}^{k+1} \gamma_i}
\sum_{n\geq 0} \binom{p-1+n}{n} (u-1)^n \sum_{m=0}^n  (-1)^{n-m}\binom{n}{m}(1-t)^{km} =\\
&&  \frac{zt(1-u)}{\gamma_1} + \sum_{n \geq 0} \sum_{m=0}^n
 (-1)^{n-m-1}\binom{n}{m}(u-1)^{n-m}zt \sum_{k \geq 1}
\frac{(u-1)^{m+1}u^k(1-t)^{k(m+1)}}{\prod_{i=1}^{k+1} \gamma_i} =\\
&&  \frac{zt(1-u)}{\gamma_1} + \sum_{n \geq 0} \binom{p-1+n}{n} \sum_{m=0}^n
 (-1)^{n-m-1}\binom{n}{m}(u-1)^{n-m}\frac{zt}{(1-zt)^{m+1}} \times \\
&& \ \ \ \sum_{k \geq 1}
\frac{(u-1)^{m+1}(1-zt)^{m+1}u^k(1-t)^{k(m+1)}}{\prod_{i=1}^{k+1} \gamma_i}.
\end{eqnarray*}

In \cite{KR}, we have proved the following lemma.
\begin{lemma}\label{psilem}
\begin{eqnarray}\label{psiform}
\psi_{m+1}(u) &=&  \sum_{k \geq 0}
\frac{(u-1)^{m+1}(1-zt)^{m+1}u^k(1-t)^{k(m+1)}}{\prod_{i=1}^{k+1} \gamma_i}
\nonumber \\
&=& - \sum_{j=0}^m (u-1)^j (1-zt)^j u^{m-j}
\prod_{i=j+1}^m (1 -((1-t)^i).
\end{eqnarray}
\end{lemma}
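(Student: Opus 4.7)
My plan is induction on $m$, establishing a first-order recursion that both sides satisfy together with matching base cases. To streamline notation I will write $w=(u-1)(1-zt)$ and $q=1-t$, so that $\gamma_i=u-wq^{i-1}$ and the left-hand side of the lemma becomes
\[\psi_{m+1}(u)=\sum_{k\geq 0}\frac{w^{m+1}u^k q^{k(m+1)}}{\prod_{j=0}^{k}(u-wq^j)}.\]
All infinite sums are to be interpreted in the formal framework already in force (for instance, the formal geometric expansion $\sum_{k\geq 0}u^k = 1/(1-u)$), which is the setting in which $H^{(p)}(t,u,1,z)$ has been manipulated just before the lemma.

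I first handle the base case $m=0$ by direct telescoping. The algebraic identity $u^{k+1}-u^k(u-wq^k)=wu^k q^k$ rewrites the summand as
\[\frac{wu^k q^k}{\prod_{j=0}^{k}(u-wq^j)} = \frac{u^{k+1}}{\prod_{j=0}^{k}(u-wq^j)}-\frac{u^{k}}{\prod_{j=0}^{k-1}(u-wq^j)}.\]
Summing over $k\geq 0$, the series telescopes and the ``tail'' term vanishes in the formal completion, leaving $\psi_1(u)=-1$, which is the right-hand side $R_0$ (where I write $R_m$ for the claimed closed form).

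For the inductive step, I start from the polynomial identity $(u-wq^{k+1})-uq^m=u(1-q^m)-wq^{k+1}$, which yields the two-step telescoping
\[\frac{u^{k+1}q^{km}}{\prod_{j=0}^{k}(u-wq^j)}-\frac{u^{k+2}q^{(k+1)m}}{\prod_{j=0}^{k+1}(u-wq^j)}=\frac{(1-q^m)u^{k+2}q^{km}}{\prod_{j=0}^{k+1}(u-wq^j)}-\frac{wu^{k+1}q^{km+k+1}}{\prod_{j=0}^{k+1}(u-wq^j)}.\]
Summing over $k\geq 0$ telescopes the left-hand side to $u/(u-w)$, while re-indexing $k\mapsto k-1$ on the right expresses the two sums in terms of $\psi_m/w^m$ and $\psi_{m+1}/w^m$ plus the $k=0$ boundary contributions. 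After routine simplification these combine to the recursion
\[\psi_{m+1}(u)=(1-q^m)\,u\,\psi_m(u)-w^m \qquad (m\geq 1).\]
A short computation, splitting off the $j=m$ term in $R_m$ and extracting the factor $(1-q^m)$ from the rest via $\prod_{i=j+1}^m(1-q^i)=(1-q^m)\prod_{i=j+1}^{m-1}(1-q^i)$, shows that $R_m=-w^m+(1-q^m)\,u\,R_{m-1}$ with $R_0=-1$. Hence $R_m$ obeys the same recursion with the same base case, and the induction closes.

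The main obstacle is the inductive step: orchestrating the index shift in the two right-hand sums so that they reassemble cleanly into $\psi_m$ and $\psi_{m+1}$, and checking that the leftover boundary terms cancel precisely against the telescoped contribution $u/(u-w)$ on the left. Everything else is routine polynomial algebra once those identifications are in place.
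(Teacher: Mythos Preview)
Your argument is correct. Note, however, that the present paper does not actually prove Lemma~\ref{psilem}: it simply quotes the result from \cite{KR}, so there is no in-paper proof to compare against. Your derivation is a clean, self-contained substitute. The base case $\psi_1=-1$ telescopes as you describe, and the recursion $\psi_{m+1}=(1-q^m)\,u\,\psi_m-w^m$ together with the matching recursion $R_m=-w^m+(1-q^m)\,u\,R_{m-1}$ for the closed form (both verified by straightforward algebra) closes the induction.

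Two small remarks. First, the formal setting you invoke is precisely the $(u,t,z)$-adic one: each factor $u-wq^j=\gamma_{j+1}$ has constant term $1$ and is therefore a unit, while the tails carry a factor $u^{N+1}$ and hence tend to $0$; this justifies all of your telescoping limits. Second, your inductive step can be streamlined. Writing
\[
wq^{k}-(1-q^{m})u=-(u-wq^{k})+uq^{m}
\]
inside the $k$-th summand of $\psi_{m+1}-(1-q^{m})u\,\psi_m$ makes that difference telescope directly:
\[
\frac{w^{m}u^{k}q^{km}\bigl(wq^{k}-(1-q^{m})u\bigr)}{\prod_{j=0}^{k}(u-wq^{j})}
= -\,\frac{w^{m}u^{k}q^{km}}{\prod_{j=0}^{k-1}(u-wq^{j})}
+\frac{w^{m}u^{k+1}q^{(k+1)m}}{\prod_{j=0}^{k}(u-wq^{j})},
\]
whose sum over $k\ge 0$ is $-w^{m}$. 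This avoids the index shift and the boundary bookkeeping you flagged as the main obstacle, but it is an optimisation rather than a correction.
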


It thus follows that

\begin{eqnarray*}
H^{(p)}(t,u,1,z) &=&   \frac{zt(1-u)}{\gamma_1} + \sum_{n \geq 0} 
\binom{p-1+n}{n} \sum_{m=0}^n
 (-1)^{n-m-1}\binom{n}{m}(u-1)^{n-m}\frac{zt}{(1-zt)^{m+1}} \times \\
&& \left(-\frac{(u-1)^{m+1}(1-zt)^{m+1}}{\gamma_1} - \sum_{j=0}^m (u-1)^j (1-zt)^j u^{m-j}
\prod_{i=j+1}^m (1 -(1-t)^i)\right).
\end{eqnarray*}
There is no problem in setting $u=1$ in this expression to obtain that
\begin{equation}
H^{(p)}(t,1,1,z) = \sum_{n\geq 0} \binom{p-1+n}{n} \frac{zt}{(1-zt)^{n+1}} \prod_{i=1}^n
(1-(1-t)^i).
\end{equation}

Clearly, our definitions ensure that
$1+H(t,1,1,z) = A^{(p)}(t,z)$ as defined in the introduction so
that we have the following theorem.

\begin{theorem}\label{mainzeros} For all $p \geq 1$,
\begin{equation}\label{eqzeros} 
A^{(p)}(t,z) = \sum_{w \in Asc(p)} t^{|w|}z^{|w|_0} =
1+ \sum_{n \geq 0} \binom{p-1+n}{n} \frac{zt}{(1-zt)^{n+1}} \prod_{i=1}^n (1-(1-t)^i).
\end{equation}
\end{theorem}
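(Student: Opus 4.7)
The plan is to take the formula for $H^{(p)}(t,u,1,z)$ given in Theorem \ref{thmH1} and massage it into a form in which the substitution $u=1$ can be made without encountering indeterminate expressions, then use the fact that $A^{(p)}(t,z) = 1 + H^{(p)}(t,1,1,z)$, which follows directly from the definitions (the extra $1$ accounting for the empty sequence).

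The obvious obstruction is that the series
$$H^{(p)}(t,u,1,z)=\sum_{n=0}^\infty \frac{zt(1-u)u^n(1-t)^n}{\delta_n^p \prod_{i=1}^{n+1} \gamma_i}$$
carries an overall factor of $(1-u)$, while each individual term has a $\gamma_1 = 1+zt(u-1)$ in its denominator that vanishes at $u=1$ only after many cancellations. To circumvent this, I would first rewrite $\delta_k = 1 - (u-1)((1-t)^k - 1)$ and apply Newton's generalized binomial theorem to expand
$$\frac{1}{\delta_k^p} = \sum_{n \geq 0} \binom{p-1+n}{n} (u-1)^n \sum_{m=0}^n (-1)^{n-m}\binom{n}{m}(1-t)^{km}.$$
Substituting this into the series for $H^{(p)}$, interchanging the orders of summation, and isolating the inner sum over $k$ then produces — up to factors of $(u-1)^{n-m}$, $\binom{n}{m}$, signs, and $zt/(1-zt)^{m+1}$ — exactly the quantity $\psi_{m+1}(u)$ from Lemma \ref{psilem}.

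The second step is to invoke Lemma \ref{psilem} to replace $\psi_{m+1}(u)$ by its closed form, which is a finite polynomial expression in $u$ and $u-1$. After this substitution the only residual singular term is the $zt(1-u)/\gamma_1$ coming from the $k=0$ contribution, which pairs with the leading $-(u-1)^{m+1}(1-zt)^{m+1}/\gamma_1$ term of $\psi_{m+1}$; the remaining pieces are polynomial in $u$. At this point, setting $u=1$ kills every summand except the $n=m$ terms (since the outer factor $(u-1)^{n-m}$ forces $m=n$), and each such surviving term collapses to
$$\binom{p-1+n}{n}\,\frac{zt}{(1-zt)^{n+1}}\prod_{i=1}^{n}(1-(1-t)^i),$$
exactly as claimed.

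The main obstacle I expect is purely bookkeeping: carefully tracking the signs, the two sources of $(u-1)$ factors (from the binomial expansion of $\delta_k^{-p}$ and from the $\psi_{m+1}$ closed form), and verifying that after combining the $k=0$ contribution with the leading piece of $\psi_{m+1}$ the expression becomes polynomial in $u$ so that $u=1$ can be substituted termwise. Once that cancellation is clean, the final identification with the right-hand side of the theorem is immediate, and adding $1$ for the empty sequence yields $A^{(p)}(t,z)$.
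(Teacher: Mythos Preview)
Your proposal is correct and follows essentially the same route as the paper: separate off the $k=0$ term, expand $\delta_k^{-p}$ via Newton's binomial theorem, recognize the inner $k$-sum as (a shift of) $\psi_{m+1}(u)$ from Lemma~\ref{psilem}, and then observe that at $u=1$ only the $m=n$, $j=0$ terms survive. One small wording issue: the term $-(u-1)^{m+1}(1-zt)^{m+1}/\gamma_1$ is not a ``leading term of $\psi_{m+1}$'' but rather the $k=0$ term that must be subtracted from $\psi_{m+1}$ (which sums over $k\geq 0$) to match the $k\geq 1$ sum arising here; also note that $\gamma_1|_{u=1}=1$, so there is no genuine singularity to cancel---the obstruction is purely the overall $(1-u)$ factor, which your $(u-1)^{n-m}$ bookkeeping handles correctly.
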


The case $p=1$ in Theorem~\ref{mainzeros} gives exactly the same formula for 
$A^{(1)}(t,z)$ as that derived in \cite{KR}, which should be the case. 
We also note that the authors conjectured in \cite{KR} that 

\begin{equation}\label{equality-zeros}
1+ \sum_{k=0}^\infty \frac{zt}{(1-zt)^{k+1}} \prod_{i=1}^k (1-((1-t)^i) 
= 1 + \sum_{m=1}^\infty \prod_{i=1}^m (1-(1-t)^{i-1}(1-zt)).
\end{equation}

This was proved independently by Jel\'inek \cite{Jel}, Levande 
\cite {levande}, and Yan \cite{Yan}. It would be interesting 
to find an analogue of this relation for $p > 1$.

We have used Mathematica to compute the first few terms
of $A^{(p)}(t,z)$ for $p=2,3,4$:
\begin{eqnarray*}
&&A^{(2)}(t,z) = 1+ z t+\left(2z+z^2\right) t^2+\left(6 z+4 z^2+z^3\right) t^3+\left(21 z+18 z^2+6 z^3+z^4\right) t^4 +\\
&& \left(84 z+87 z^2+36 z^3+8 z^4+z^5\right) t^5+\left(380 z+456
z^2+222 z^3+60 z^4+10 z^5+z^6\right) t^6+O[t]^7.
\end{eqnarray*}

\begin{eqnarray*}
&&A^{(3)}(t,z) = 1+ z t+\left(3z+z^2\right) t^2+\left(12 z+6 z^2+z^3\right) t^3+\left(54 z+36 z^2+9 z^3+z^4\right) t^4 +\\
&&\left(270 z+222 z^2+72 z^3+12 z^4+z^5\right) t^5+\\
&&\left(1490 z+140
z^2+564 z^3+120 z^4+15 z^5+z^6\right) t^6+O[t]^7.
\end{eqnarray*}

\begin{eqnarray*}
&&A^{(4)}(t,z) = 1+ z t+\left(4z+z^2\right) t^2+\left(20 z+8 z^2+z^3\right) t^3+\left(110 z+60 z^2+12 z^3+z^4\right) t^4 +\\
&&\left(660 z+450 z^2+90 z^3+16 z^4+z^5\right) t^5+\\
&&\left(4300 z+3480
z^2+1140 z^3+200 z^4+20 z^5+z^6\right) t^6+O[t]^7.
\end{eqnarray*}

Next we can use the same techniques as in \cite{DKRS} 
to find the generating function for the number of {\em primitive 
$p$-ascent sequences}. That is, 
let $r_{n,p}$ denote the number of $p$-ascent sequences $a$ 
of length $n$ such that $a$ has no consecutive repeated letters 
and $a_{n,p}$ denote the number of $p$-ascent sequences $a$ 
of length $n$.

If 
\begin{eqnarray*}
R^{(p)}(t) &=&  1+ \sum_{n\geq 1} r_{n,p} t^n \ \mbox{and} \\
A^{(p)}(t) &=&  1+ \sum_{n\geq 1} a_{n,p} t^n,
\end{eqnarray*}
then it is easy to see that 
\begin{equation}\label{primitive1}
A^{(p)}(t) = A^{(p)}(t,1) = R^{(p)}\left(\frac{t}{1-t}\right)= R^{(p)}(t+t^2+\cdots),
\end{equation}
since each element in a primitive $p$-ascent sequence can be repeated any specified number of times.

Setting $x = \frac{t}{1-t}$ so that $t = \frac{x}{1+x}$, we 
see that (\ref{primitive1}) implies that 
\begin{equation}\label{primitive2}
R^{(p)}(x) = A^{(p)}\left(\frac{x}{1+x}\right).
\end{equation}
But by (\ref{eqzeros}), we know that 
$$A^{(p)}(t) = 1+ \sum_{n=0}^\infty \binom{p-1+n}{n} 
\frac{t}{(1-t)^{n+1}} \prod_{i=1}^n (1-(1-t)^i).$$
Hence, 
\begin{eqnarray*}
R^{(p)}(x) &=&  
1+ \sum_{n=0}^\infty \binom{p-1+n}{n} 
\frac{\frac{x}{1+x}}{(1-\frac{x}{1+x})^{n+1}} \prod_{i=1}^n \left(1-\left(1-\frac{x}{1+x}\right)^i\right) \\
&=& 1+ x\sum_{n=0}^\infty \binom{p-1+n}{n}  (1+x)^n 
\prod_{i=1}^n \left(1-\left(\frac{1}{1+x}\right)^i\right).
\end{eqnarray*}

Thus, we have the following theorem. 
\begin{theorem}\label{thm:primitive} For all $p \geq 1$, 
\begin{equation}\label{primitive3}
R^{(p)}(t) = 
1+ t\sum_{n=0}^\infty \binom{p-1+n}{n} 
(1+t)^n \prod_{i=1}^n \left(1-\left(\frac{1}{1+t}\right)^i\right).
\end{equation}
\end{theorem}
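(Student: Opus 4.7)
The plan is to exploit the bijective relationship between primitive $p$-ascent sequences and arbitrary $p$-ascent sequences. Crucially, one observes that collapsing each maximal run of equal consecutive letters in a $p$-ascent sequence $a$ into a single letter yields a primitive $p$-ascent sequence, and this operation respects the $p$-ascent condition because repeating a letter does not create new ascents and does not change the bound that governs the next distinct letter. Conversely, starting from any primitive $p$-ascent sequence of length $k$, one may independently replace each letter by an arbitrary positive number of copies of itself to recover exactly once every $p$-ascent sequence that reduces to it. Since a single letter contributes the generating function $t+t^2+t^3+\cdots = t/(1-t)$, this yields the functional identity
\begin{equation*}
A^{(p)}(t) = R^{(p)}\!\left(\frac{t}{1-t}\right),
\end{equation*}
which is precisely equation~(\ref{primitive1}) in the text.

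Next I would invert the substitution $x = t/(1-t)$, which gives $t = x/(1+x)$, to obtain
\begin{equation*}
R^{(p)}(x) = A^{(p)}\!\left(\frac{x}{1+x}\right).
\end{equation*}
At this point I invoke Theorem~\ref{mainzeros} with $z=1$, which evaluates $A^{(p)}(t) = A^{(p)}(t,1)$ to an explicit sum, and substitute $t = x/(1+x)$ into that formula.

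The remaining step is bookkeeping: simplify the factor $1-t$ under the substitution $t = x/(1+x)$, noting that $1-t = 1/(1+x)$, so that
\begin{equation*}
\frac{t}{(1-t)^{n+1}} = \frac{x/(1+x)}{(1/(1+x))^{n+1}} = x(1+x)^n,
\end{equation*}
and each factor in the finite product becomes $1-(1-t)^i = 1 - (1/(1+x))^i$. Collecting these substitutions produces the formula asserted in Theorem~\ref{thm:primitive} after relabeling the indeterminate $x$ as $t$. Since no delicate analytic manipulation is required beyond the substitution of $t = x/(1+x)$ into a known closed form, the only real obstacle is ensuring that the functional equation $A^{(p)}(t) = R^{(p)}(t/(1-t))$ is justified rigorously; everything else is formal power series manipulation.
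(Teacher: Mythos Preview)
Your proposal is correct and follows essentially the same approach as the paper: establish the functional identity $A^{(p)}(t)=R^{(p)}(t/(1-t))$ via the run-collapsing bijection, invert it to $R^{(p)}(x)=A^{(p)}(x/(1+x))$, and then substitute into the $z=1$ specialization of Theorem~\ref{mainzeros}, simplifying using $1-t=1/(1+x)$. If anything, you spell out the bijective justification and the algebraic bookkeeping in slightly more detail than the paper does.
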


For example, we have computed that 
\begin{eqnarray*}
&&R^{(2)}(t) = 1+t +2t^2 + 6t^3+21t^4+87t^5 + 413 t^6 +2213 t^7 
+ 13205 t^8 + 86828 t^9 +O[t]^{10},\\
&&R^{(3)}(t) = 1+t +3t^2 + 12t^3+54t^4+276t^5 + 1574 t^6 +9916 t^7 
+ 68394 t^8 + 512671 t^9 +O[t]^{10}, \\
&&R^{(4)}(t) = 1+t +4t^2 + 20t^3+110t^4+670t^5 + 4470 t^6 +32440 t^7 
+ 254490 t^8 + 2146525 t^9 +\\
&&O[t]^{10}.
\end{eqnarray*}

We note that by (\ref{equality-zeros}), in the case of $z=1$, we have that 
$$A^{(1)}(t) = 1 + \sum_{m=1}^\infty \prod_{i=1}^m (1-(1-t)^i)=1+ \sum_{n \geq 0} \frac{t}{(1-t)^{n+1}} \prod_{i=1}^n (1-(1-t)^i).$$

This leads to two forms for  
$R^{(1)}(t) = A^{(1)}\left( \frac{t}{1+t} \right)$. That is, it 
follows that  
\begin{eqnarray*}
R^{(1)}(t) &=& 1+ t \sum_{n=0}^\infty (1+t)^n \prod_{i=1}^n 
\left( 1 - \left( \frac{1}{1+t}\right)^i \right) = 1 + \sum_{m=1}^\infty \prod_{i=1}^m \left(1-\frac{1}{(1+t)^i}\right),
\end{eqnarray*}
where the second formula was derived in \cite{DKRS}. It would 
be interesting to find an analogue of this equality for 
$p > 1$. 

Finally if we replace $t$ by $t+ t^2 + \cdots + t^k = t\frac{(t^k-1)}{t-1}$ 
in (\ref{primitive3}), then we can obtain the generating function for the 
number of $p$-ascent sequences $a$ such that the maximum length of a 
consecutive sequence of repeated letters is less than or equal to $k$: 

\begin{equation}\label{max-k-rep}
1+ t\frac{t^k-1}{t-1}\sum_{n=0}^\infty \binom{p-1+n}{n}\left(\frac{t^{k+1}-1}{t-1}\right)^n \prod_{i=1}^n \left(1-\left(\frac{t-1}{t^{k+1}-1}\right)^i\right).
\end{equation}

\section{Pattern avoidance in $p$-ascent sequences}\label{pattern-avoidance}

In this section, we shall prove some simple results about 
pattern avoidance in $p$-ascent sequences thus extending the studies initiated in \cite{DS} for ascent sequences.

We begin by considering patterns of length 2.  There are three 
such patterns, 00, 01, and 10.  Recall that $a_{n,p,u}$ (resp., $r_{n,p,u}$) is the number of (resp., primitive) $p$-ascent sequences of length $n$ that avoid a pattern $u$.

\subsection{01-avoiding $p$-ascent sequences}

The only $p$-ascent sequences 
that avoid 01 are the sequences that consist of all  zeros so 
that $a_{n,p,01} =1$ for all $n,p \geq 1$ and 
$r_{n,p,01}$ equals 1 if $n=1$ and 0 otherwise.

\subsection{10-avoiding $p$-ascent sequences}

Let us consider $r_{n,p,10}$. In this case, we are looking 
for $p$-ascent sequences which avoid 10 and have no repeated 
letters.  It is clear that any such a sequence $a$ must be of the 
form $a =a_1\ldots a_n$, where $0=a_1 < a_2 < \cdots < a_n$. 
For each $1 \leq i \leq n$, the word $a_1 \ldots a_i$ has $i-1$ 
ascents so that $a_{i+1} \leq i-1+p$. It follows 
that $r_{n,p,10}$ counts all words $a_1 a_2 \ldots a_n$, where $0=a_1 < a_2 < \cdots < a_n \leq p+n-2$.  Hence 
\begin{equation}
r_{n,p,10} = \binom{p+n-2}{n-1} .
\end{equation}

Note that it follows from 
Newton's Binomial Theorem that 
\begin{eqnarray}
R^{(p)}_{10}(t) &=& 1+ \sum_{n \geq 1} r_{n,p,10} t^n \nonumber \\
&=& 1 + \sum_{n \geq 1} \binom{p-1+n-1}{n-1} t^n \nonumber \\
&=& 1+ \frac{t}{(1-t)^p}. 
\end{eqnarray}

It is easy to see that the $p$-ascent sequences counted 
by $a_{n,p,10}$ arise by taking a  sequence $d_1 \ldots d_s$ counted 
by $r_{s,p,10}$ for some $s \leq n$ and replacing 
each letter $d_i$ by one or more copies so that the resulting 
word is of length $n$.  The number of ways to do this for a given 
$d_1 \ldots d_s$ is the number of solutions to 
$b_1+ \cdots +b_s =n$, where $b_i \geq 1$, which is 
$\binom{n-1}{s-1}$. Thus 
\begin{equation}
a_{n,p,10}= \sum_{s=1}^n \binom{n-1}{s} r_{s,p,10} = 
 \sum_{s=1}^n \binom{n-1}{s-1} \binom{p+s-2}{s-1}=\sum_{s=0}^{n-1} \binom{n-1}{s} \binom{p+s-1}{s}.
\end{equation}

It also follows that 

\begin{eqnarray}
A^{(p)}_{10}(t) &=&  1+ \sum_{n \geq 1} a_{n,p,10} t^n \nonumber \\
&=& R^{(p)}_{10}\left( \frac{t}{1-t} \right ) \nonumber \\
&=& 1+\frac{t}{1-t} \frac{1}{(1-\frac{t}{1-t})^p} \nonumber \\
&=& 1+\frac{t(1-t)^{p-1}}{(1-2t)^p}. 
\end{eqnarray}

We note that the sequence $(a_{n,2,10})_{n \geq 1}$ starts out 
$1,3,8,20,48,112,256, \ldots$ and this is 
the sequence  A001792 in the OEIS~\cite{oeis} which has  
many combinatorial interpretations.

\subsection{00-avoiding $p$-ascent sequences}

Next,  consider avoiding the pattern 00. If a $p$-ascent sequence 
$a=a_1 \ldots a_n$ avoids 00, then all its  elements 
must be distinct.  Note that for each $2 \leq i \leq n$, 
$a_1 \ldots a_{i-1}$ can have at most $i-2$ ascents so 
that $a_i \leq p+i-2$.  Let $\max(a)$ denote the maximum 
of $\{a_1, \ldots, a_n\}$.  If $a$ avoids 00, then 
by the pigeon hole principle, it must be the case 
that $\max(a) \geq n-1$.  Thus, if $a$ avoids 00, 
then $n-1 \leq \max(a) \leq n+p-2$.

Now consider $2$-ascent sequences that avoid 00.  Suppose that 
$a=a_1 \ldots a_n$ is a 2-ascent sequence which avoids 00. Then we know that  
$\max(a) \in\{n-1,n\}$.  If $\max(a) =n$, $a$ must be strictly increasing 
and there must be some smallest $k\geq 1$ such that $a_k=k$,  
In such a situation, 
it is easy to see that $a$ must be of the form 
$0,1,\ldots, k-2,k,k+1, \ldots n$.  Thus there are 
$n-1$ 2-ascent sequences $a$ of length $n$ such that $a$ avoids 
00 and $\max(a) =n$.  

Next, suppose that $a =a_1 \ldots a_n$ is a 2-ascent sequence that 
avoids 00 and $\max(a) =n-1$.  Then there are two cases. Namely, 
it could be that there is no $k \leq n$ such that $a_k =k$. In that 
case, $a$ is the increasing sequence $a = 012 \ldots (n-1)$. 
Otherwise, let $j$ equal the smallest $i$ such that 
$a_i=i$.  Then $a$ must be strictly increasing up to $a_j$ so 
that $a$ starts out $012 \ldots (j-2) j$.  Since $\max(a) = n-1$, 
it follows that $\{a_1, \ldots, a_n\} = \{0,1, \ldots, n-1\}$ so 
that there must be some $j < k \leq n$ such that 
$a_k =j-1$. In that case, $a_{k-1} > a_k$ so that $a$ has at least 
one descent.  However, if $\max(a) =n-1$, $a$ can have at most one 
descent.  Thus, once we have placed $j-1$, the remaining elements 
must be placed in increasing order.  It is then easy to check that 
no matter where we place $j-1$ after position $j$, the resulting 
sequence will be a 2-ascent sequence. It follows 
that the number of 2-ascent sequences which avoid 
00 and have one descent is $\sum_{j=1}{n-1} (n-j) = \binom{n-1}{2}$.

It is easy to check that wherever we place 
$j-1$, $j-1$ will cause a descent and there can be at most 
one descent in $a$.  Thus, we have the following theorem. 

\begin{theorem} For all $n \geq 1$, 
\begin{equation}
a_{n,2,00} = n-1 + 1 +\binom{n-1}{2} = 1+ \binom{n}{2}.
\end{equation}
\end{theorem}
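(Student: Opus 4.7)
The plan is to carry out a case analysis on the value of $\max(a)$ for a 2-ascent sequence $a = a_1 \ldots a_n$ avoiding 00. First I would record the two basic constraints: avoiding 00 forces the entries to be pairwise distinct (any two equal entries would reduce to the pattern 00), and the 2-ascent condition gives $a_i \leq 2 + \asc(a_1, \ldots, a_{i-1}) \leq i$ for all $i \geq 2$. Combining these, $\max(a) \leq n$, while pigeonhole applied to the $n$ distinct nonnegative integers $a_1, \ldots, a_n$ with $a_1 = 0$ gives $\max(a) \geq n - 1$. So $\max(a) \in \{n - 1, n\}$, and I would split into two cases.

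For the case $\max(a) = n$, I would first show that $a$ is strictly increasing. If $a_j = n$, then $n \leq 2 + \asc(a_1, \ldots, a_{j-1}) \leq j$, forcing $j = n$. Any descent within $a_1, \ldots, a_{n-1}$ would drop $\asc(a_1, \ldots, a_{n-1})$ to at most $n - 3$ and hence force $a_n \leq n - 1$, a contradiction. So $0 = a_1 < a_2 < \cdots < a_n = n$, and such a sequence is uniquely determined by the single skipped value in $\{1, \ldots, n - 1\}$, yielding the $n - 1$ sequences of the form $0, 1, \ldots, j - 2, j, j + 1, \ldots, n$.

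For the case $\max(a) = n - 1$, the multiset of entries equals $\{0, 1, \ldots, n - 1\}$. If $a_i \neq i$ for all $i$, then $a_i \leq i - 1$ for every $i$; by induction starting from $a_1 = 0$ together with distinctness, this forces $a_i = i - 1$, giving the single strictly increasing sequence $0, 1, \ldots, n - 1$. Otherwise, let $j$ be the smallest index with $a_j = j$ (so $j \geq 2$); the same inductive argument forces the initial segment to be $a_1, \ldots, a_{j-1} = 0, 1, \ldots, j - 2$. Applying the ascent bound at the position $i$ where $a_i = n - 1$, I would show $\asc(a_1, \ldots, a_{i-1}) \geq n - 3$, which restricts $i \in \{n - 1, n\}$ and implies $a$ has at most one descent. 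That descent must then be created by reinserting the skipped value $j - 1$ at some position $k \in \{j + 1, \ldots, n\}$, with the remaining entries filling the other positions in increasing order; a direct check verifies each such placement yields a valid 2-ascent sequence. This contributes $\sum_{j = 2}^{n-1}(n - j) = \binom{n - 1}{2}$ sequences, and summing the three contributions gives $(n - 1) + 1 + \binom{n - 1}{2} = 1 + \binom{n}{2}$, as desired.

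The main technical obstacle is the ``at most one descent'' claim in the case $\max(a) = n - 1$: it requires carefully locating the position of the maximum value and using the ascent bound there to rule out a second descent, then arguing that in either subcase $i = n-1$ or $i = n$ the transitions outside the identified descent are forced to be ascents. Once this structural claim is in hand, the parametrisation by $(j, k)$ and the verification that every such pair produces a legitimate 2-ascent sequence (in particular that $a_{k+1}, \ldots, a_n$ are forced to increase by 1 each step) are routine.
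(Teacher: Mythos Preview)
Your proposal is correct and follows essentially the same decomposition as the paper's proof: split on $\max(a)\in\{n-1,n\}$, with the $\max(a)=n-1$ case further split by whether some index satisfies $a_i=i$, and then parametrise the one-descent sequences by the pair $(j,k)$. If anything, you supply more detail than the paper does for the ``at most one descent'' claim in the $\max(a)=n-1$ case---the paper simply asserts it, whereas you derive it from the ascent bound at the position of the maximum.
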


We computed that the sequence $(a_{n,3,00})_{n\geq 1}$ starts 
out  
$$1,3,9,24,57,122,239,435,745,1213,1893,2850, \ldots.$$
This is the sequence A089830 in the OEIS~\cite{oeis}, 
whose generating function is $$\frac{1-3x+6x^2-5x^3+3x^4-x^5}{(1-x)^6}.$$

In this case, if $a=a_1 \ldots a_n$ is a 3-ascent sequence which avoids 00, 
then we know that $n-1 \leq \max(a)\leq n+1$.  We shall  prove that  
$$\sum_{n \geq 1} a_{n,3,00} x^n = 
\frac{x(1-3x+6x^2-5x^3+3x^4-x^5)}{(1-x)^6}$$ by classifying  
the 3-ascent sequences $a$ which avoid 00 by the $\max(a)$ and 
$\des(a)$, where $\des(a)$ is the number of {\em descents} in $a$, that is, the number of elements followed by smaller elements. \\
\ \\
{\bf Case 1}. $\des(a) =0$. \\
Suppose that $a = a_1 \ldots a_n$ is an increasing 3-ascent 
sequence that avoids 00.  Now, if $\max(a) =n-1$, 
then $a = 012 \ldots (n-1)$.  If $\max(a) = n$, 
then exactly one element from $[n]=\{1, \ldots, n-1\}$ does not 
appear in $a$.  If $i$ does not appear in $a$, 
then $a = 01 \ldots (i-1)(i+1) (i+2) \ldots n$, which is 
a 3-ascent sequence. Thus, there are $n-1$ increasing 3-ascent sequences 
whose maximum is $n$. Finally, if $\max(a) =n+1$, then  
two elements from $[n]$ do not appear in $a$. Again, it is easy 
to check that no matter which two elements from $[n]$ we leave out, 
the resulting increasing sequence will be a 3-ascent sequence. Thus, 
there are $\binom{n}{2}$ increasing 3-ascent sequences whose 
maximum is $n+1$. Therefore, the total number of increasing 3-ascents 
sequences of length $n$ is $1+(n-1)+\binom{n}{2} = \binom{n+1}{2}$. \\
\ \\
{\bf Case 2.} $\des(a) = 1$. \\
In this case, if $a =a_1 \ldots a_n$ is a 3-ascent sequence such 
that $\des(a) =1$ and $a$ avoids 00, then $\max(a) \in \{n-1,n\}$. 
Suppose that $a_j > a_{j-1}$.  Then we have two subcases. \\
\ \\
{\bf Subcase 2.1.} $a_j =j+1$.  \\
In this case,  there must be two elements $1 \leq u < v \leq j$, which 
do not appear in $a_1 \ldots a_j$.  Clearly, we have 
$\binom{j}{2}$ ways to pick $u$ and $v$. We then 
have three subcases.\\
\ \\
{\bf Subcase 2.1.1.} Both $u$ and $v$ appear in $a$. In this case, 
$a$ must start out $a_1 \ldots a_j u v$ so that 
$a_{j+3} \ldots a_n$ must be an increasing sequence 
from $[n]-[j+1]$ of length $n-j-2$.  Clearly, there 
are $n-j-1$ such subsequences and it is easy to check 
that we can attach any such subsequence at the end 
of the sequence $a_1 \ldots a_j u v$ to obtain 
a 3-ascent sequence avoiding 00. \\
\ \\
{\bf Subcase 2.1.2.} $u$ appears in $a$, but $v$ does not appear in $a$.\\
In this case, $a$ must be of the form $a_1 \ldots a_j u \gamma$, where 
$\gamma$ is the increasing sequence $(j+2) (j+3) \ldots n$. \\ 
\ \\
{\bf Subcase 2.1.3.} $v$ appears in $a$, but $u$ does not appear in $a$.\\
In this case, $a$ must be of the form $a_1 \ldots a_j v \gamma$, where 
$\gamma$ is the increasing sequence $(j+2) (j+3) \ldots n$. \\
\ \\
It follows that the number of 3-ascent sequences counted in 
Case 2.1 is $\sum_{j=2}^{n-1}\binom{j}{2}(n-j+1)$. One can 
verify by Mathematica that 
$$\sum_{j=2}^{n-1}\binom{j}{2}(n-j+1) = \binom{n}{3} + \binom{n+1}{4}.$$
\ \\
{\bf Case 2.2.}  $a_j =j$. \\
In this case, there is one element $u$ in $[j]$ which does 
not appear in $a_1 \ldots a_j$, so that the sequence 
must start out $a_1 \ldots a_j u$.  The rest of 
the sequence must be the increasing rearrangement of 
$\{j+1, \ldots, n\}- \{v\}$ for some $v \in \{j+1, \ldots, n\}$. 
Thus, we have $j-1$ choices for $u$ and $n -j$ choices for $v$. 
Hence the number of 3-ascent sequences in Case 2.2 is 
$\sum_{j=2}^{n-1}(j-1)(n-j)$. One can check by Mathematica that 
$\sum_{j=2}^{n-1}(j-1)(n-j) = \binom{n}{3}$.

Thus, the number of 3-ascent sequences with one descent, which 
avoid 00 is $2\binom{n}{3} + \binom{n+1}{4}$.\\
\ \\
{\bf Case 3} $\des(a) =2$.\\
In this case, it must be that $\max(a) =n-1$, so that 
$a$ must contain all the elements in the sequence $0,1, \ldots ,n-1$. 
Now, suppose that the first descent of $a$ occurs at position $j$. 
Then we have two cases. \\
\ \\
{\bf Case 3.1} $a_j =j$. \\
In this case, there must be $u$, $1 \leq u \leq j-1$, which does 
not appear in $a_1 \ldots a_j$ and $a_{j+1}=u$. We have $j-1$ choices 
for $u$.  The sequence $a_{j+2} \ldots a_n$ must 
be a rearrangement of $(j+1) (j+2) \ldots (n-1)$, which has one 
descent. The bottom element of the descent pair that 
occurs in $a_{j+2} \ldots a_n$ must equal $s$ for some 
$j+1 \leq s \leq n-2$ and the top element of the descent 
must equal $t$, where $s+1 \leq t \leq n-1$.  It is easy to check 
that any choice of $s$ and $t$ will yield a 3-ascent sequence,  
so that the number of choices for the sequence $a_{j+2} \ldots a_n$ 
is 
\begin{eqnarray*}
\sum_{s =(j+1)}^{n-2} n-1-s &=& \sum_{r=1}^{n-2-j} n-1-(r+j) \\
&=& \sum_{r=1}^{n-2-j} n-1-j -r  = \binom{n-1-j}{2}.
\end{eqnarray*}

It follows that the number of 3-ascent sequences in Case 3.1 is 
$\sum_{j=2}^{n-2} (j-1)  \binom{n-1-j}{2}$, which can be shown 
by Mathematica to be equal to $\binom{n-1}{4}$. \\
\ \\
{\bf Case 3.2} $a_j =j+1$. \\
In this, there must be two elements $1 \leq u \leq v \leq j$ that 
do note appear in $a_1 \ldots a_j$. We have $\binom{j}{2}$ ways 
to choose $u$ and $v$. We then have two subcases. \\
\ \\
{\bf Case 3.2.1} $a_{j+1} =v$.  \\
In this case, our sequences start out $a_1 \ldots a_j = (j+1) v$ 
and where every $u$ occurs in the sequence $a_{j+2} \ldots a_n$, it 
will cause a second descent so that there are $n-j-1$ choices in 
this case. \\
\ \\
{\bf Case 3.2.2} $a_{j+1} = u$. \\
In this case, the sequence $a_{j+2} \ldots a_n$ consists 
of the sequence $v (j+2) (j+3) \ldots (n-1)$ and we can argue 
as we did in Case 3.1 that there are $\binom{n-j-1}{2}$ choices 
for the sequence $a_{j+2} \ldots a_n$.\\
\ \\
It follows that the total number of choices for the sequence 
$a_{j+1} \ldots a_n$ in Case 3 is $n-j-1+\binom{n-j-1}{2} = \binom{n-j}{2}$. 
Thus the total number of choices in Case 3.2 is  
$$\sum_{j=1}^{n-2} \binom{j}{2} \binom{n-j}{2} = \binom{n+1}{5}.$$
Note that the last equality can be checked by Mathematica.

Putting all the cases together, we see that the number of 
3-ascent sequences of length $n$, which avoid 00 is equal to 
$$\binom{n+1}{2} + 2 \binom{n}{3} + \binom{n+1}{4} + \binom{n-1}{4} + 
\binom{n+1}{5}.$$
Using the fact that $\binom{n+1}{4}+ \binom{n+1}{5} = 
\binom{n+2}{5}$, we see that we have the following theorem. 

\begin{theorem} For all $n \geq 1$, 
$$a_{n,3,00} = \binom{n+1}{2} + 2 \binom{n}{3} + \binom{n-1}{4} + 
\binom{n+2}{5}.$$
\end{theorem}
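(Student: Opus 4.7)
The plan is to stratify the 3-ascent sequences $a = a_1\dots a_n$ of length $n$ avoiding $00$ by the number of descents $\des(a)$. Since avoiding $00$ forces all entries of $a$ to be distinct and the $p=3$ condition bounds $a_i \leq \asc(a_1\dots a_{i-1}) + 3 \leq i+1$, the pigeonhole observation already made in the excerpt gives $n-1 \leq \max(a) \leq n+1$; combined with distinctness this forces $\des(a) \in \{0,1,2\}$ (three descents would require $\max(a) \geq n+2$). So it suffices to count each of the three families and check that their sum collapses to $\binom{n+1}{2} + 2\binom{n}{3} + \binom{n-1}{4} + \binom{n+2}{5}$.

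For $\des(a) = 0$, the sequence is the increasing rearrangement of some $n$-subset of $\{0,1,\dots,\max(a)\}$ starting at $0$, and one checks directly that any such strictly increasing sequence starting at $0$ is automatically a 3-ascent sequence. Splitting by $\max(a) \in \{n-1, n, n+1\}$ gives $1 + (n-1) + \binom{n}{2} = \binom{n+1}{2}$ sequences. For $\des(a) = 1$, I would locate the unique position $j$ where the ascending prefix breaks (so $a_j > a_{j-1}$ and $a_{j+1} < a_j$), and condition on whether $a_j = j$ or $a_j = j+1$; each subcase forces the set of values not yet used and the placement of the descent suffix, and one simply counts the number of valid completions. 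The binomial sums $\sum_{j=2}^{n-1}\binom{j}{2}(n-j+1) = \binom{n}{3}+\binom{n+1}{4}$ and $\sum_{j=2}^{n-1}(j-1)(n-j) = \binom{n}{3}$ (both provable by hockey-stick identities) yield the contribution $2\binom{n}{3} + \binom{n+1}{4}$.

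For $\des(a) = 2$, necessarily $\max(a) = n-1$ so $a$ is a permutation of $\{0,1,\dots,n-1\}$; I would locate the first descent at position $j$ and again condition on $a_j = j$ versus $a_j = j+1$. The first subcase (with the unused small element $u$ forced into position $j+1$) reduces the suffix to a one-descent completion of $\{j+1,\dots,n-1\}$, contributing $\sum_{j=2}^{n-2}(j-1)\binom{n-1-j}{2} = \binom{n-1}{4}$. The second subcase further splits on whether $a_{j+1} = v$ (the larger unused element) or $a_{j+1} = u$ (the smaller); summing $\binom{j}{2}[(n-j-1) + \binom{n-j-1}{2}] = \binom{j}{2}\binom{n-j}{2}$ and applying the Vandermonde-style identity $\sum_{j=1}^{n-2}\binom{j}{2}\binom{n-j}{2} = \binom{n+1}{5}$ gives the final contribution.

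Adding the three contributions and using the Pascal identity $\binom{n+1}{4}+\binom{n+1}{5} = \binom{n+2}{5}$ collapses the sum to the claimed formula. The main obstacle is the two-descent case: one must verify that after fixing the descent position and the two missing values, every placement of the smaller unused element $u$ in the suffix produces a legal 3-ascent sequence (i.e.\ that each prefix satisfies $a_i \leq \asc(a_1\dots a_{i-1}) + 3$), and careful bookkeeping is needed to avoid either over- or under-counting when $u$ and $v$ play different roles. Once the geometry of the two descents and the constraint $\max(a) = n-1$ are properly pinned down, the remaining identities are routine hockey-stick/Vandermonde calculations.
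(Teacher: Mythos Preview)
Your proposal is correct and follows essentially the same approach as the paper: both stratify by $\des(a)\in\{0,1,2\}$, split the one- and two-descent cases according to whether the top of the first descent is $a_j=j$ or $a_j=j+1$, and assemble the same binomial sums (the paper verifies the identities by Mathematica where you invoke hockey-stick/Vandermonde). The final Pascal step $\binom{n+1}{4}+\binom{n+1}{5}=\binom{n+2}{5}$ is also identical.
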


Note that it follows from Newton's binomial theorem 
that 
\begin{eqnarray*}
\sum_{n \geq 1} \binom{n+1}{2} x^n &=& \frac{x}{(1-x)^3}, \\
\sum_{n \geq 1} 2 \binom{n}{3} x^n &=& \frac{2x^3}{(1-x)^4}, \\
\sum_{n \geq 1} \binom{n-1}{4} x^n &=& \frac{x^5}{(1-x)^5}, \ \mbox{and} \\
\sum_{n \geq 1} \binom{n+2}{5} x^n &=& \frac{x^3}{(1-x)^6}.
\end{eqnarray*}
Adding these series together and simplifying, we have the following 
theorem. 
\begin{theorem} The generating function
$$\sum_{n \geq 1} a_{n,3,00} x^n = \frac{x(1-3x+6x^2-5x^3+3x^4-x^5)}{(1-x)^6}.
$$
\end{theorem}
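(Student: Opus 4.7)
The plan is straightforward: combine the four generating function identities displayed just before the theorem with the closed form for $a_{n,3,00}$ established in the previous theorem, and then carry out the algebraic simplification.

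More explicitly, I would start by invoking the identity
$$a_{n,3,00} = \binom{n+1}{2} + 2\binom{n}{3} + \binom{n-1}{4} + \binom{n+2}{5}$$
from the preceding theorem, so that
$$\sum_{n\geq 1} a_{n,3,00}\, x^n = \frac{x}{(1-x)^3} + \frac{2x^3}{(1-x)^4} + \frac{x^5}{(1-x)^5} + \frac{x^3}{(1-x)^6},$$
using the four Newton-binomial evaluations displayed in the text. The only remaining task is to put the four fractions over the common denominator $(1-x)^6$ and add the numerators.

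Multiplying numerator and denominator of each term appropriately, the numerator becomes
$$x(1-x)^3 + 2x^3(1-x)^2 + x^5(1-x) + x^3.$$
Expanding $(1-x)^3 = 1-3x+3x^2-x^3$ and $(1-x)^2 = 1-2x+x^2$ and collecting terms of each degree in $x$ yields
$$x - 3x^2 + 6x^3 - 5x^4 + 3x^5 - x^6 = x(1 - 3x + 6x^2 - 5x^3 + 3x^4 - x^5),$$
which gives exactly the claimed rational form. There is no real obstacle here — the content is all in the previous theorem, and this final statement is essentially a bookkeeping corollary; the only thing to be careful about is tracking the signs in the expansion of $(1-x)^3$ and $(1-x)^2$ correctly so that the coefficients $-3, 6, -5, 3, -1$ in the numerator come out as stated.
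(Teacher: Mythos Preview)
Your proposal is correct and follows exactly the paper's own approach: the paper also invokes the closed form for $a_{n,3,00}$, lists the same four Newton-binomial generating-function identities, and then states that ``adding these series together and simplifying'' yields the theorem. Your write-up simply makes the common-denominator computation explicit, and the algebra you give is accurate.
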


We note that Burstein and Mansour \cite{BM} 
gave a combinatorial interpretation to the $n$-th element in 
sequence A089830 as the 
number of words $w =w_1 \ldots w_{n-1} \in \{1,2,3\}^*$, which 
avoid the vincular pattern 
21-2 (also denoted in the literature $\underline{21}2$; see~\cite{Kit}).  That is, there are no subsequences of the form $w_i w_{i+1} w_j$ 
in $w$ such that $i+1 <j$ and $w_i = w_j > w_{i+1}$.  We ask the question 
whether one can construct a simple bijection between such words 
and the set of 3-ascent sequences of length $n$, which avoid 00.

We note that the sequence $(a_{n,4,00})_{n \geq 1}$ starts  
out $1,4,16,58,190,564,1526,3794 \ldots$.  This sequence 
does appear in the OEIS.

\subsection{012-avoiding $p$-ascent sequences}

Now suppose that $a =a_1 \ldots a_n$ is a $p$-ascent sequence 
such that $a$ avoids 012.  The first thing to observe is that if 
$a_i =1$ for some $i$, then since $a_1=0$, it must be the case 
that $a_j \in \{0,1\}$ for all $j \geq i$. The second 
thing to observe is that $a_i \leq p$ for all $i$.  
That is, the only way that $a$ can have an element $a_k > p$ is if 
$a_1 \ldots a_{k-1}$ has at least $a_k-p$ ascents. Since 
the first ascent in a $p$-ascent sequence must be of one of 
the forms $01, 02, \ldots, 0p$, such an ascent sequence 
would not avoid 012. \\

\noindent
{\bf 2-ascent sequences.} Now, suppose that $a =a_1 \ldots a_n$ is a $2$-ascent sequence  
such that $a$ avoids 012. If $a$ has no 1s, then 
$a_i \in \{0,2\}$ for all $i \geq 2$, so that 
there are $2^{n-1}$ such 2-ascent sequences. If $a$ contains a 
1, then let $k$ be the smallest $j$ such that $a_j$ equals 1. 
It then follows that $a_i \in \{0,2\}$ for $2 \leq i < k$ and 
$a_j \in \{0,1\}$ for $k < j \leq n$. Thus, there 
are $2^{n-2}$ such 2-ascent sequences, so that the number of 
$2$-ascent sequences that avoid 012 and contain a 1 is 
$(n-1)2^{n-2}$. Hence, for $n \geq 1$,  
\begin{equation}
a_{n,2,012} = 2^{n-1} + (n-1)2^{n-2} = (n+1)2^{n-2}.
\end{equation}

We note that the sequence $(a_{n,2,012})_{n \geq 1}$ starts out 
$1,3,8,20,48,112,256, \ldots$, and this is, again, as in the case of  $(a_{n,2,10})_{n \geq 1}$, 
the sequence  A001792 in the OEIS~\cite{oeis}. Next, we will explain this fact combinatorially.

It is easy to see that each sequence counted by $(a_{n,2,012})_{n \geq 1}$ can be obtained by taking a number of 2s (maybe none) followed by a number of 1s, and placing any number of 0s (maybe none) between these 1s and 2s making sure that the total length of the sequence is $n$, and this sequence begins with a 0. On the other hand, it is also straightforward to see that sequences counted by  $(a_{n,2,10})_{n \geq 1}$ are of two types: they are either of the form
\begin{equation}\label{form1-seq}
\underbrace{0\ldots 0}_{i_0\geq 1}\underbrace{1\ldots 1}_{i_1\geq 1}\underbrace{2\ldots 2}_{i_2\geq 1}\ldots,
\end{equation}
or of the form
\begin{equation}\label{form2-seq}
\underbrace{0\ldots 0}_{i_0\geq 1}\underbrace{1\ldots 1}_{i_1\geq 1}\underbrace{2\ldots 2}_{i_2\geq 1}\ldots\underbrace{a\ldots a}_{i_a\geq 1}\underbrace{(a+2)\ldots (a+2)}_{i_{a+2}\geq 1}\underbrace{(a+3)\ldots (a+3)}_{i_{a+3}\geq 1}\underbrace{(a+4)\ldots (a+4)}_{i_{a+4}\geq 1}\ldots,
\end{equation}
where $a\geq 0$ exists. A bijection between the classes of sequences is given by turning sequences of the form (\ref{form1-seq}) into 
$$\underbrace{0\ldots 0}_{i_0}2\underbrace{0\ldots 0}_{i_1-1}2\underbrace{0\ldots 0}_{i_2- 1}\ldots,$$
and the sequences of the form (\ref{form2-seq}) into  
$$\underbrace{0\ldots 0}_{i_0}2\underbrace{0\ldots 0}_{i_1- 1}2\underbrace{0\ldots 0}_{i_2- 1}\ldots2\underbrace{0\ldots 0}_{i_a- 1}1\underbrace{0\ldots 0}_{i_{a+2}-1}1\underbrace{0\ldots 0}_{i_{a+3}- 1}1\underbrace{0\ldots 0}_{i_{a+4}- 1}\ldots.$$

\noindent
{\bf 3-ascent sequences.} Now, suppose that $a =a_1 \ldots a_n$ is a $3$-ascent sequence  
such that $a$ avoids 012. If $a$ has no 1s, then 
$a_i \in \{0,2,3\}$ for all $i \geq 2$. 
It is then easy to see that if $b_1 \ldots b_{n}$ is the 
sequence that arises from $a_1 \ldots a_{n}$ by replacing 
each 2 by a 1 and each 3 by a 2, then $b$ is a 2-ascent 
sequence that avoids 012. Thus, there are 
$(n+1) 2^{n-2}$ such sequences. Now, suppose that $a$ contains a 1. 
Then let $k$ be the smallest $j$ such that $a_j$ equals 1. 
It then follows that $a_i \in \{0,2,3\}$ for $2 \leq i < k$ and 
$a_j \in \{0,1\}$ for $k < j \leq n$. 
It is then easy to see that if $b_1 \ldots b_{k-1}$ is the 
sequence that arises from $a_1 \ldots a_{k-1}$ by replacing 
each 2 by a 1 and each 3 by a 2, then $b_1 \ldots b_{k-1}$ is a 2-ascent 
sequence that avoids 012.  Thus, from our argument above, it follows  that there are $k2^{k-3}$ choices for $a_1 \ldots a_{k-1}$ and 
$2^{n-k}$ choices for $a_{k+1} \ldots a_n$.  Therefore, given $k$, 
we have $k2^{n-3}$ choices for $a$.  
Thus,
\begin{eqnarray}
a_{n,3,012} &=&(n+1)2^{n-2} + \sum_{k=2}^n k 2^{n-3} \nonumber \\
&=& 2^{n-3} \left(2n+2+ \sum_{k=2}^n k\right) \nonumber \\
&=&  2^{n-3}\left(2n+2 +\binom{n+1}{2} -1\right) \\
&=& 2^{n-3}\frac{4n+4 +n^2+n -2}{2} = 2^{n-4}(n^2+5n +2). 
\end{eqnarray}

We note that the sequence $(a_{n,3,012})_{n \geq 1}$ starts out 
$1, 4, 13, 38, 104, 272, 688, \ldots$ and this is 
the sequence  A049611 in the OEIS~\cite{oeis} having several combinatorial interpretations. \\

\noindent
{\bf $p$-ascent sequences for an arbitrary $p$.} In general, we can obtain a simple recursion for 
$a_{n,p,012}$.  That is, 
suppose that $a =(a_1, \ldots, a_n)$ is a $p$-ascent sequence 
such that $a$ avoids 012. Now, if $a$ has no 1s, then 
$a_i \in \{0,2,3, \ldots ,p\}$ for all $i \geq 2$. 
It is then easy to see that if $b=(b_1, \ldots, b_{n})$ is the 
sequence that arises from $a$ by replacing 
each $i \geq 2$,  by an $i-1$, then $b$ is a $(p-1)$-ascent 
sequences that avoids 012. Thus, there are 
$a_{n,p-1,012}$ such sequences. Now suppose that $a$ contains a 1. 
Then let $k$ be the smallest $j$ such that $a_j$ equals 1. 
It then follows that $a_i \in \{0,2,3, \ldots, p\}$ for $2 \leq i < k$ and 
$a_j \in \{0,1\}$ for $k < j \leq n$. 
It is then easy to see that if $b_1 \ldots b_{k-1}$ is the 
sequence that arises from $a_1 \ldots a_{k-1}$ by replacing 
each $i \geq 2$ by an $i-1$, then $b_1 \ldots b_{k-1}$ is a 2-ascent 
sequences that avoids 012.  It follows  
that there are $a_{k-1,p-1,012}$ choices for $a_1 \ldots a_{k-1}$ and 
$2^{n-k}$ choices for $a_{k+1} \ldots a_n$.  Thus, given $k$, 
we have $2^{n-k}a_{k-1,p-1,012}$ choices for $a$.  
It follows that 
\begin{equation}
a_{n,p,012} = a_{n,p-1,012} + \sum_{k=2}^n a_{k-1,p-1,012} 2^{n-k}.
\end{equation}

Thus, for example, 
\begin{eqnarray*}
a_{n,4,012} &=& 
2^{n-4}(n^2+5n+2) + \sum_{k=2}^n 2^{k-5}((k-1)^2 + 5(k-1)+2)2^{n-k}\\
&=& 2^{n-5}\left(2n^2 + 10n+4+\sum_{k=1}^{n-1} (k^2 +5k +2)\right) \\
&=& 2^{n-5} (2n^2 +10n+4 +(1/3)(n^3 +6n^2-n-6)) \\
&=& \frac{2^{n-5}}{3}(n^3+12n^2+29n+6).
\end{eqnarray*}
We note that the sequence $(a_{n,4,012})_{n \geq 1}$ starts out 
$$1,5,19,63,192,552,1520,4048,10496,26264, \ldots$$ and this is 
the sequence  A049612 in the OEIS~\cite{oeis}.


\begin{thebibliography}{30}

\bibitem{BCDK} M. Bousquet-M\'elou, A. Claesson, M. Dukes, S. Kitaev, Unlabeled (2+2)-free
posets, ascent sequences and pattern avoiding permutations. {\em J.
Combin. Theory Ser. A}, {\bf 117} Issue 7 (2010), 884--909.

\bibitem{BM} A. Burstein and T. Mansour, Words restricted by 3-letter 
generalized multipermutation patterns, Annals of Combinatorics, 
{\bf 7:1} (2003), 1-14. 



\bibitem{cdk} A.\ Claesson, M.\ Dukes, and S.\ Kitaev, A direct encoding of Stoimenow's matchings as ascent sequences,
{\em Australasian J. Combin.} {\bf 49} (2011) 47--59.

\bibitem{DKRS} 
M. Dukes, S. Kitaev, J. Remmel, and E. Steingrimsson, Enumerating 
$2+2$-free posets by indistinguishable elements, {\em J. Combin.}, 
{\bf 2}(1) (2011), 139--163. 

\bibitem{DP} M. Dukes and R. Parviainen, Ascent sequences and 
upper triangular matrices containing non-negative integers, 
Electronic J. Combin., {\bf 17} no.1 (2010), \#R53. 





\bibitem{DS} P. Duncan and E. Steingr\'{i}msson, Pattern avoidance 
in ascent sequences, Electronic J. Combin., {\bf 18} Issue 1 (2011), 
\# P226. 


\bibitem{ZAHAR} M. H. El-Zahar, Enumeration of ordered sets, in: I.
  Rival (Ed.), {\em{Algorithms and Order}}, Kluwer Academic
  Publishers, Dordrecht, 1989, 327--352.



\bibitem{FISH_BOOK} P. C. Fishburn, {\em{Interval Graphs and Interval
    Orders}}, Wiley, New York, 1985.

\bibitem{FISH_OPER} P. C. Fishburn, Intransitive indifference in
  preference theory: a survey, {\em{Oper. Res.}} {\bf{18}} (1970)
  207--208.

\bibitem{fishburn} P. C. Fishburn, Intransitive indifference with
  unequal indifference intervals, {\em{J. Math. Psych.}} {\bf{7}}
  (1970) 144--149.

\bibitem{haxell}
P. E. Haxell, J. J. McDonald and S. K. Thomasson, Counting interval
orders,
 {\em{Order}} {\bf 4} (1987) 269--272.

\bibitem{Jel} V. Jel\'inek, Counting general and sefl-dual interval 
orders, J. Combinatorial Theory (A), {\bf 119} (2012), 599-614.

\bibitem{smkhamis} S. M. Khamis, Height counting of unlabeled interval
  and $N$-free posets, {\em{Discrete Math.}} {\bf{275}} (2004)
  165--175.
  
\bibitem{Kit} S. Kitaev, Patterns in permutations and words, Springer-Verlag, 2011.

\bibitem{KR} S. Kitaev and J. Remmel, Enumerating 
$(2+2)$-free posets by the number of minimal 
  elements and other statistics, {\em Discrete Appl. Math.} {\bf 159}, 
Issue 17 (2011) 2098--2108.

\bibitem{levande} P. Levande, Fishburn diagrams, Fishburn numbers, 
and their refined generating functions, J. of Combinatorial Theory (A), 
{\bf 120} (2013), 194-217.

\bibitem{Rem} J. Remmel, Up-down ascent sequences and the $q$-Genocchi numbers, to appear Pure Mathematics and Applications. 

\bibitem{SKANDERA} M. Skandera, A characterization of $(3+1)$-free
  posets, {\em{J. Combin. Theory Ser. A}} {\bf{93}}, no. 2  (2001)
  231--241.

\bibitem{oeis} N.~J.~A.~Sloane, The on-line encyclopedia of integer sequences,
published electronically at \phantom{*} {\tt
http://oeis.org}.

\bibitem{stanley}
R.~P. Stanley,
\newblock {\em Enumerative combinatorics {V}ol. 1}, volume~49 of {\em
  Cambridge Studies in Advanced Mathematics},
\newblock Cambridge University Press, Cambridge, 1997.

\bibitem{stoim} A. Stoimenow,
Enumeration of chord diagrams and an upper bound for Vassiliev
invariants, {\em{J. Knot Theory Ramifications}} {\bf{7}} no. 1
(1998) 93--114.

\bibitem{wimp-zeil}
J. Wimp and D. Zeilberger,
    Resurrecting the asymptotics of linear recurrences,
   {\em J. Math. Anal. Appl.},  {\bf 111} no. 1
     (1985) 162--176.

\bibitem{Yan} S.H.F. Yan, On a conjecture about enumerating 
(2+2)-free posets, Europ. J. Combinatorics, {\bf 32} (2011), 282-287.



\bibitem{zagier} D. Zagier,  Vassiliev invariants and a strange
identity related to the Dedeking eta-function, {\em Topology} {\bf
40} (2001) 945--960.

\bibitem{ZZ} J. Zeng and J. Zhou, A $q$-analog of the Seidel generation of
Genocchi numbers, {\em European J. Combin.} {\bf 27} (2006) 364--381


\end{thebibliography}
\end{document}